\documentclass[11pt, a4paper]{article}
\usepackage{latexsym,amssymb}
\usepackage{amsmath}
\usepackage[mathscr]{eucal}
\usepackage{color}
\usepackage[abbrev]{amsrefs}

\newtheorem{Theorem}{\bf Theorem}[section]
\newtheorem{Lemma}{\bf Lemma}[section]
\newtheorem{Proposition}{\bf Proposition}[section]
\newtheorem{Corollary}{\bf Corollary}[section]
\newtheorem{Remark}{\bf Remark}[section]
\newtheorem{Example}{\bf Example}[section]
\newtheorem{Definition}{\bf Definition}[section]

\newenvironment{theorem}{\begin{Theorem}$\!\!\!$}{\end{Theorem}}
\newenvironment{lemma}{\begin{Lemma}$\!\!\!$}{\end{Lemma}}
\newenvironment{proposition}{\begin{Proposition}$\!\!\!$}{\end{Proposition}}
\newenvironment{corollary}{\begin{Corollary}$\!\!\!$}{\end{Corollary}}
\newenvironment{remark}{\begin{Remark}$\!\!\!$}{\end{Remark}}

\newenvironment{definition}{\begin{Definition}$\!\!\!$}{\end{Definition}}

\numberwithin{equation}{section}

\numberwithin{equation}{section}

\newcommand{\dee}{{\rm{d}}}

\def\XXint#1#2#3{{\setbox0=\hbox{$#1{#2#3}{\int}$}
\vcenter{\hbox{$#2#3$}}\kern-.5\wd0}}

\allowdisplaybreaks

\begin{document}
\title{Preservation of $F$-convexity under the heat flow}
\author{
%\qquad\\
Kazuhiro Ishige, Troy Petitt, and Paolo Salani}
\date{}
\maketitle
\begin{abstract}
We introduce the notion of $F$-convexity as a general extension of power convexity.
We characterize the $F$-convexities preserved under the heat flow in the $n$-dimensional Euclidean space, and identify the strongest and the weakest ones among them.
We also characterize the $F$-convexities preserved under the Dirichlet heat flow in convex domains.
\end{abstract}

\vspace{40pt}
\noindent 
Addresses:

\smallskip
\noindent 
\smallskip
\noindent 
K. I.: Graduate School of Mathematical Sciences, The University of Tokyo,\\ 
3-8-1 Komaba, Meguro-ku, Tokyo 153-8914, Japan\\
\noindent 
E-mail: {\tt ishige@ms.u-tokyo.ac.jp}\\

\smallskip
\noindent 
T. P.: Departamento de Matem\'aticas, Universidad Carlos III de Madrid,\\
Avda. de la Universidad, 30. 28911 Legan\'es, Spain\\
\noindent 
E-mail: {\tt tpetitt@math.uc3m.es}\\

\smallskip
\noindent 
P. S.: Dipartimento di Matematica ``U. Dini", Universit\`{a} di Firenze,\\ viale Morgagni 67/A, 50134 Firenze, Italy\\
\noindent 
E-mail: {\tt paolo.salani@unifi.it}\\

\newpage
%%%%%%%%%%%%%%%%%%%%%%
%%%%%%%%%%%%%%%%%%%%%%
\section{Introduction}
%%%%%%%%%%%%%%%%%%%%%%
%%%%%%%%%%%%%%%%%%%%%%
Consider the Cauchy problem for the heat equation, 
\begin{equation}
\tag{H}
\label{eq:H}
\left\{
\begin{array}{ll}
\partial_t u=\Delta u,\quad & x\in{\mathbb R}^n,\,\,\,t>0,\vspace{3pt}\\
u(x,0)=\phi(x),\quad & x\in{\mathbb R}^n, 
\end{array}
\right.
\end{equation}
where $n\ge 1$ and $\phi$ is a nonnegative continuous function on ${\mathbb R}^n$. 
Problem~\eqref{eq:H} admits a nonnegative classical solution in ${\mathbb R}^n\times(0,T)$ for some $T\in(0,\infty]$ 
if and only if 
\begin{equation}
\label{eq:1.1}
\int_{{\mathbb R}^n}e^{-\gamma|x|^2}\phi(x)\,\dee x<\infty\quad\text{for some $\gamma>0$}.
\end{equation}
Furthermore, under condition~\eqref{eq:1.1}, 
the function $e^{t\Delta}\phi$ defined by 
\begin{equation}
\label{eq:1.2}
\left[e^{t\Delta}\phi\right](x):=
\begin{cases}
\displaystyle\int_{{\mathbb R}^n}\Gamma_n(x-y,t)\phi(y)\,\mathrm{d}y, 
  & (x,t)\in{\mathbb R}^n\times(0,\infty),\\[8pt]
\phi(x), & (x,t)\in{\mathbb R}^n\times\{0\},
\end{cases}
\end{equation}
is the unique nonnegative classical solution to problem~\eqref{eq:H} 
(see, e.g., \cite{A}*{Theorem~11}). Here and throughout the paper, $\Gamma_n$ denotes the Gauss kernel on ${\mathbb R}^n\times(0,\infty)$, that is, 
$$
\Gamma_n(x,t):=\,(4\pi t)^{-\frac{n}{2}}\exp\left(-\frac{|x|^2}{4t}\right)
\quad\text{for } (x,t)\in{\mathbb R}^n\times(0,\infty).
$$ 
The representation formula~\eqref{eq:1.2} directly yields the preservation of convexity and log-convexity  {under} the heat flow.
Indeed, denoting by $T(\phi)$ the maximal existence time of $e^{t\Delta}\phi$,
we obtain the following properties.
\begin{itemize}
  \item[(A1)]
  Assume that $\phi$ is convex in ${\mathbb R}^n$. Then 
  \begin{equation*}
  \begin{split}
   & \left[e^{t\Delta}\phi\right]((1-\lambda)x_0+\lambda x_1) =\int_{{\mathbb R}^n}\Gamma_n(y,t)\phi((1-\lambda)x_0+\lambda x_1-y)\,\dee y\\
   & \qquad\quad
   \le\int_{{\mathbb R}^n} {\Gamma_n}(y,t)\left\{(1-\lambda)\phi(x_0-y)+\lambda\phi(x_1-y)\right\}\,\dee y\\
   & \qquad\quad
   =(1-\lambda)\left[e^{t\Delta}\phi\right](x_0)+\lambda \left[e^{t\Delta}\phi\right](x_1)
  \end{split}
  \end{equation*}
  for $x_0$, $x_1\in{\mathbb R}^n$, $t\in (0,T(\phi))$, and $\lambda\in(0,1)$. 
  Hence, $e^{t\Delta}\phi$ is convex in ${\mathbb R}^n$ for all $t\in(0,T(\phi))$. 
  \item[(A2)] 
  Assume that $\phi$ is log-convex in ${\mathbb R}^n$, that is, 
  $$
  \phi((1-\lambda)x_0+\lambda x_1)\le \phi(x_0)^{1-\lambda}\phi(x_1)^\lambda,\quad x_0,x_1\in{\mathbb R}^n,\,\,\,\lambda\in(0,1).
  $$
  Then  {H\"older's inequality yields}
  \begin{equation*}
  \begin{split}
   & \left[e^{t\Delta}\phi\right]((1-\lambda)x_0+\lambda x_1)\\
   & \le \int_{{\mathbb R}^n} {\Gamma_n}(y,t)\phi(x_0-y)^{1-\lambda}\phi(x_1-y)^\lambda\,\dee y\\
   & \le \left(\int_{{\mathbb R}^n} {\Gamma_n}(y,t)\phi(x_0-y)\,\dee y\right)^{1-\lambda}\left(\int_{{\mathbb R}^n} {\Gamma_n}(y,t)\phi(x_1-y)\,\dee y\right)^\lambda\\
   & =\left[e^{t\Delta}\phi\right](x_0)^{1-\lambda}\left[e^{t\Delta}\phi\right](x_1)^\lambda,
   \quad x_0,x_1\in{\mathbb R}^n,\,\,\,\lambda\in(0,1),\,\,\,t\in(0,T(\phi)).
  \end{split}
  \end{equation*}
 Hence, $e^{t\Delta}\phi$ is log-convex in ${\mathbb R}^n$ for all $t\in(0,T(\phi))$. 
\end{itemize}
The preservation of convexity for solutions of linear parabolic equations on ${\mathbb R}^n$ with at most polynomial growth
was studied in detail in~\cite{JT}.

On the other hand, in \cite{IST05},
the first and second authors of the present paper, together with Takatsu,
introduced the notion of $F$-concavity and characterized $F$-concavities  {preserved under} the Dirichlet heat flow.
In particular, they proved that, for nonnegative functions, among $F$-concavity properties, only log-concavity is preserved 
 {under} the heat flow when $n\ge 2$ (see \cite{IST05}*{Theorem~1.5}).
In the one-dimensional case $n=1$, only log-concavity and quasi-concavity are preserved  {under} the heat flow (see \cite{IST05}*{Theorem~1.6}). 
These results, together with (A1) and (A2), suggest that the variety of convexity properties preserved  {under} the heat flow, 
when considering nonnegative solutions, is richer than that of concavity properties.

In this paper, we refine and adapt the arguments in~\cite{IST05} to introduce the notion of $F$-convexity,
which provides a general extension of the classical concepts of convexity and log-convexity.
We then characterize $F$-convexities  {preserved under} the heat flow.
Furthermore, we  {introduce} an order structure that allows us to compare the strength of different
$F$-convexity properties, and, under a suitable restriction on $F$, we identify both the strongest 
and the weakest $F$-convexities  {preserved under} the heat flow.
\medskip

We begin by introducing the notion of $F$-convexity for  {nonnegative} functions.
Throughout this paper, we adhere to the convention that
\begin{align*}
 & -\infty\leq -\infty,\qquad
 \infty\le\infty,\qquad
 -\infty+b
=b-\infty=-\infty, \qquad
\kappa \cdot(\pm\infty)=\pm\infty,\\
 & \,e^{-\infty}=0,\qquad \log 0=-\infty,\qquad -\log 0=\infty,\qquad \log\infty=\infty,
\end{align*}
where $b\in [-\infty,\infty)$ and $\kappa\in(0,\infty)$. 
\begin{definition}
\label{Definition:1.1}
A function $F:[0,\infty)\rightarrow{\mathbb R}\cup\{-\infty\}$ is {\em admissible} on $[0,\infty)$ if 
$F$ is strictly increasing on $[0,\infty)$, continuous on $(0,\infty)$, and $F(0)=\lim_{r\to 0^+}F(r)$.
Throughout the paper, we denote by $f_F$ the inverse function of $F$ on $J_F:=F((0,\infty))$. 
\end{definition}
\begin{definition}
\label{Definition:1.2}
Let $F$ be admissible on $[0,\infty)$. 
\begin{itemize}
  \item[{\rm (1)}] 
  A nonnegative function $f$ on ${\mathbb R}^n$ is said to be {\em $F$-convex} if $F(f)$ is convex in ${\mathbb R}^n$; that is, 
  $F(f)$ satisfies 
  $$
  F(f((1-\lambda)x+\lambda y))\le(1-\lambda)F(f(x))+\lambda F(f(y))
  $$
  for all $x$, $y\in{\mathbb R}^n$ and $\lambda\in(0,1)$. 
  We denote by $\mathcal{C}[F]$ the set of $F$-convex {\rm({\it nonnegative})} functions on ${\mathbb R}^n$.
  For any admissible functions $F_1$ and $F_2$, if ${\mathcal C}[F_1]\subset{\mathcal C}[F_2]$, 
  we say that $F_1$-convexity is stronger than $F_2$-convexity 
  or equivalently, that $F_2$-convexity is weaker than $F_1$-convexity. 
  \item[{\rm (2)}] 
  $F$-convexity is said to be {\em trivial} if ${\mathcal C}[F]$ consists only of nonnegative  {constant} functions on~${\mathbb R}^n$. 
  \item[{\rm (3)}] 
  We say that $F$-convexity is {\em preserved}  {under} the heat flow if, for any $F$-convex {\rm ({\it nonnegative})} function $\phi$ with 
  $T(\phi)\in(0,\infty]$, the function $e^{t\Delta}\phi$ remains $F$-convex for all $t\in(0,T(\phi))$.
  Furthermore, we say that $F$-convexity is {\em only trivially preserved}  {under} the heat flow if the following holds:
  whenever $\phi$ is $F$-convex on ${\mathbb R}^n$, satisfies $T(\phi)\in(0,\infty]$, and $e^{t\Delta}\phi$ is 
  $F$-convex for all $t\in(0,T(\phi))$, then $\phi$ must be constant on ${\mathbb R}^n$.
 \end{itemize}
\end{definition}
\begin{remark}
\label{Remark:1.1}
Let $F$ be admissible on $[0,\infty)$ and assume that
\[
\lim_{r\to\infty} F(r) < \infty .
\]
Then $F$-convexity is trivial.
Indeed, since any convex function  {on ${\mathbb R}^n$ that is bounded above}  is constant,
if $F(\phi)$ is convex on ${\mathbb R}^n$ for some nonnegative function $\phi$ on ${\mathbb R}^n$,
then $\phi$ must be constant on ${\mathbb R}^n$.
\end{remark}

We  {next} define $\alpha$-convexity. 
\begin{definition}
\label{Definition:1.3}
Let $\alpha\in[-\infty,\infty]$. 
\begin{itemize}
  \item[{\rm (1)}]
  For any $a,b>0$ and $\lambda\in(0,1)$, define 
  \[
  {\mathcal M}_{\alpha}(a,b;\lambda)
  :=\left\{
  \begin{array}{ll}
  \max\{a,b\} & \text{if } \alpha=\infty,\\[4pt]
  \left((1-\lambda)a^{\alpha}+\lambda b^{\alpha}\right)^{\frac{1}{\alpha}}
  & \text{if } \alpha\in\mathbb{R}\setminus\{0\},\\[4pt]
  a^{1-\lambda}b^\lambda & \text{if } \alpha=0,\\[4pt]
  \min\{a,b\} & \text{if } \alpha=-\infty.
  \end{array}
  \right.
  \]
  Furthermore, for $a,b\ge 0$, we define ${\mathcal M}_{\alpha}(a,b;\lambda)$ as above if $\alpha\ge 0$, and
  \[
  {\mathcal M}_{\alpha}(a,b;\lambda)=0
  \quad \text{if } \alpha<0 \text{ and } a\,b=0.
  \]

  \item[{\rm (2)}]
  A nonnegative function $f$ on ${\mathbb R}^n$ is said to be $\alpha$-convex if
  \[
  f((1-\lambda)x+\lambda y)\le {\mathcal M}_{\alpha}(f(x),f(y);\lambda)
  \]
  for all $x,y\in{\mathbb R}^n$ and $\lambda\in(0,1)$.
  We denote by ${\mathcal C}[\alpha]$ the set of $\alpha$-convex functions on ${\mathbb R}^n$.
  Note that the case $\alpha=1$ corresponds to the usual convexity, while $\alpha=0$ corresponds to log-convexity.
\end{itemize}  
\end{definition}
We refer to $\infty$-convexity as \emph{quasi-convexity}: it is easily seen that a function is {\em quasi-convex} if and only if all its sublevel sets are convex.
The term \emph{power convexity} is used as a general term for $\alpha$-convexity. 
 {We next present several remarks on power convexity.}
\begin{itemize}
  \item 
  For each $\alpha\in{\mathbb R}$, 
  define a function $\Phi_\alpha$ on $[0,\infty)$ by
  \[
  \Phi_\alpha(r):=\int_1^r s^{\alpha-1}\,\mathrm{d}s
  =\left\{
  \begin{array}{ll}
  \dfrac{r^\alpha-1}{\alpha} & \text{if } \alpha\not=0,\\[6pt]
  \log r  & \text{if } \alpha=0,
  \end{array}
  \right.
  \]
  for $r\in(0,\infty)$, while 
  \[
  \Phi_\alpha(0):=
  \left\{
  \begin{array}{ll}
  \dfrac{-1}{\alpha} & \text{if } \alpha>0,\\[6pt]
-\infty & \text{if } \alpha\leq0.
  \end{array}
  \right.
  \]
  Then $\Phi_\alpha$ is admissible on $[0,\infty)$ and ${\mathcal C}[\Phi_\alpha]={\mathcal C}[\alpha]$. 
  Hence, $\alpha$-convexity is a particular case of $F$-convexity. 
  When $\alpha\in(-\infty,0)$, 
  it follows from Remark~\ref{Remark:1.1} that $\alpha$-convexity is trivial, 
  that is, ${\mathcal C}[\alpha]$ consists only of nonnegative constant functions.
  \item
  Any $(-\infty)$-convex function on ${\mathbb R}^n$ must be constant on ${\mathbb R}^n$ {; hence,} $(-\infty)$-convexity is trivial.
  \item 
  Although quasi-convexity does not admit any corresponding admissible function on $[0,\infty)$
  (see, e.g., \cite{IST03}*{Remark~2.2} and \cite{CR}), 
   {the monotonicity of admissible functions implies that} 
  any $F$-convex function is necessarily quasi-convex. 
  \item 
  If $-\infty\le \alpha\le\beta\le \infty$, then $\alpha$-convexity is stronger than $\beta$-convexity; 
  equivalently, $\beta$-convexity is weaker than $\alpha$-convexity. 
   {This leads to the following hierarchy:}
\end{itemize}
$$
\underset{(\text{trivial})}{\text{$(-\infty)$-convexity}}=\underset{(\text{trivial for $\alpha<0$})}{\text{negative power convexity}}\subsetneq
\underset{(\alpha=0)}{\text{log-convexity}}
\subsetneq \underset{(\alpha=1)}{\text{convexity}}\subsetneq
\underset{(\alpha=\infty)}{\text{quasi-convexity}}
$$

We are now ready to state the main results of this paper. 
In the first theorem, we provide a necessary and sufficient condition 
for $F$-convexity to be preserved  {under} the heat flow. 
\begin{theorem}
\label{Theorem:1.1}
Let $F$ be admissible on $[0,\infty)$.
\begin{itemize}
  \item[{\rm (1)}] 
  If $\lim_{r\to\infty}F(r)<\infty$, then $F$-convexity is only trivially preserved  {under} the heat flow.
  \item[{\rm (2)}] 
  Let $\lim_{r\to\infty}F(r)=\infty$. 
  Assume that 
  \begin{equation}
  \label{eq:1.3}
  \int_{F(1)}^\infty e^{-A z^2}f_F(z)\,\mathrm{d}z=\infty
  \quad \text{for any } A>0.
  \end{equation}
  Then $F$-convexity is only trivially preserved  {under} the heat flow. 
  \item[{\rm (3)}] 
  Let $F\in C^2((0,\infty))$ and $\lim_{r\to\infty}F(r)=\infty$. 
  Assume that 
  \begin{equation}
  \tag{F}
  \label{eq:F}
  \int_{F(1)}^\infty e^{-A z^2}f_F(z)\,\mathrm{d}z<\infty
  \quad \text{for some } A>0.
  \end{equation}
  Then $F$-convexity is preserved  {under} the heat flow if and only if 
  \begin{equation}
  \label{eq:1.4}
  F'>0 \text{ in } (0,\infty),
  \qquad 
  g_F:=(\log f_F')' \text{ is convex on } J_F. 
  \end{equation}
\end{itemize}
\end{theorem}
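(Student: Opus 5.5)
Part (1) is immediate from Remark~\ref{Remark:1.1}: if $\lim_{r\to\infty}F(r)<\infty$, every $F$-convex $\phi$ is constant, so preservation holds only trivially.

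For (2), I argue by contradiction. Let $\phi$ be a nonconstant $F$-convex function with $T(\phi)>0$. Then $F(\phi)$ is a nonconstant convex function on ${\mathbb R}^n$, so there are a unit vector $e$ and constants $c>0$, $d\in{\mathbb R}$ with $F(\phi(x))\ge c\,\langle x,e\rangle+d$ for all $x$. This comes from a supporting hyperplane at a point where the subgradient is nonzero; such a point exists since a nonconstant convex function on ${\mathbb R}^n$ has nonzero subgradient somewhere. Hence $\phi(x)\ge f_F(c\langle x,e\rangle+d)$ wherever the argument lies in $J_F$. Integrate against $e^{-\gamma|x|^2}$ using coordinates $x=se+x'$. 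The integral over $x'$ gives a constant, and the substitution $z=cs+d$ turns the $s$-integral into $\int^\infty e^{-A z^2+O(z)}f_F(z)\,\dee z$ with $A=\gamma/c^2$. The linear term is absorbed by slightly enlarging $A$. By \eqref{eq:1.3} this integral diverges, so \eqref{eq:1.1} fails, contradicting $T(\phi)>0$. Hence every admissible $\phi$ is constant, which is exactly trivial preservation.

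For (3), I plan to mirror the concavity argument of \cite{IST05}, with inequalities reversed.

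\emph{Necessity.} First I show $F'>0$. If $F'(r_0)=0$, then $f_F'$ blows up at $F(r_0)$. I would take $\phi=f_F(\ell(x))$ with $\ell$ affine, which is $F$-convex and admissible by \eqref{eq:F}, and check that convexity of $F(e^{t\Delta}\phi)$ fails near the level $r_0$ for small $t$. Next I show $g_F$ is convex. For this I test with one-dimensional data $\phi(x)=f_F(\psi(x_1))$, where $\psi$ is convex: piecewise linear with one kink, or of the form $\psi(x_1)=a x_1+b x_1^2$. I then expand $F(e^{t\Delta}\phi)$ as $t\to0$. Writing $u=f_F(w)$, the second $x_1$-derivative of $w=F(u)$ obeys an evolution equation,
\begin{equation*}
\partial_t w=\Delta w+g_F(w)|\nabla w|^2 .
\end{equation*}
Differentiating twice, the term $g_F''(w)|\nabla w|^4$ appears as a source in the equation for $\partial^2_{x_1}w$. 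At a point where $\partial^2 w=0$ initially (a linear region) it gives $\partial_t\partial^2 w=g_F''(w)|\nabla w|^4$ at $t=0$, after checking that the other terms vanish. If $g_F''<0$ somewhere, an affine $\psi$ with the right slope, perturbed slightly to be convex, makes $\partial^2 w$ negative for small $t$. Making this rigorous for merely $C^2$ $F$ requires a finite-difference version: compare $F(e^{t\Delta}\phi)$ at three collinear points and use the Gaussian expansion. This is the step I expect to be the most delicate.

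\emph{Sufficiency.} Assume \eqref{eq:1.4}. Approximate $\phi$ by smooth $F$-convex data with controlled growth, justified by \eqref{eq:F}. Let $w=F(u)$, which solves the equation above. I apply a maximum-principle argument to the convexity function
\begin{equation*}
Z(x,y,\lambda,t)=(1-\lambda)w(x,t)+\lambda w(y,t)-w((1-\lambda)x+\lambda y,t),
\end{equation*}
as in the Korevaar–Kennington approach, and show that $Z$ cannot reach a first negative minimum. At such a minimum the gradients coincide, $\nabla w(x)=\nabla w(y)=\nabla w(z)=p$. The Hessian conditions give $(1-\lambda)\Delta w(x)+\lambda\Delta w(y)-\Delta w(z)\ge0$ after the standard choice of test matrix. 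The gradient terms reduce to $\bigl[(1-\lambda)g_F(w(x))+\lambda g_F(w(y))-g_F(w(z))\bigr]|p|^2$. Since $w(z)<(1-\lambda)w(x)+\lambda w(y)$ there, I need this bracket to be nonnegative. Convexity of $g_F$ gives $(1-\lambda)g_F(w(x))+\lambda g_F(w(y))\ge g_F((1-\lambda)w(x)+\lambda w(y))$, but I also need $g_F$ nondecreasing to compare with $g_F(w(z))$. I would obtain this monotonicity from convexity of $g_F$ together with $\lim F=\infty$ and \eqref{eq:F}: a convex $g_F$ that is eventually decreasing would make $f_F$ fail the integrability condition, or the infimum value can be handled directly. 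Closing this gap, along with the noncompactness issues at infinity (handled by penalizing with $\varepsilon(|x|^2+|y|^2)e^{Ct}$), is the main technical obstacle of sufficiency.
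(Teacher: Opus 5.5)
Parts (1) and (2) are fine and essentially match the paper. Your supporting-hyperplane bound $F(\phi(x))\ge c\langle x,e\rangle+d$ is a slightly cleaner version of its cone estimate.

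The genuine gap is in the sufficiency half of (3). First, there is a sign slip: at a negative minimum of $Z$ you have $(1-\lambda)w(x)+\lambda w(y)<w(z)$, not the reverse. More importantly, either way your argument needs $g_F$ to be monotone: non-increasing with the correct sign, non-decreasing with yours. Neither follows from \eqref{eq:1.4} and \eqref{eq:F}, and your justification is false. For $F=\Phi_\alpha$ with $0<\alpha<1$, $g_F(z)=(1-\alpha)/(1+\alpha z)$ is strictly decreasing and convex, while $f_F(z)=(1+\alpha z)^{1/\alpha}$ grows only polynomially. In fact, under \eqref{eq:1.4} and \eqref{eq:F'} the paper shows $g_F$ is always non-increasing (proof of Theorem~\ref{Theorem:1.2}), the opposite of your claim. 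Moreover, the example in Remark~\ref{Remark:1.2}~(2), $g_F(z)=|z-1|-1$, satisfies \eqref{eq:F} and \eqref{eq:1.4} with $g_F$ non-monotone. So no monotonicity-based maximum principle can prove (3) as stated.

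The missing idea is to evaluate the first-order term at the convex combination itself rather than at $w(z)$. The paper does this with the infimal convolution $V_{\lambda,\epsilon}(x,t)=\inf\{(1-\lambda)v_\epsilon(x_0,t)+\lambda v_\epsilon(x_1,t): x=(1-\lambda)x_0+\lambda x_1\}$, where $v_\epsilon=F(u+\epsilon(|x|^2+2nt))$.
\begin{itemize}
\item The $\epsilon$-term forces $v_\epsilon\to\infty$ at infinity, so the infimum is attained at a pair $x_0^*,x_1^*$ with equal gradients $\theta$.
\item The smooth function $\varphi(x,t)=(1-\lambda)v_\epsilon(x_0^*+x-x^*,t)+\lambda v_\epsilon(x_1^*+x-x^*,t)$ lies above $V_{\lambda,\epsilon}$. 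It touches it at $(x^*,t^*)$ with value equal to the convex combination of $v_\epsilon(x_0^*,t^*)$ and $v_\epsilon(x_1^*,t^*)$.
\item Hence convexity of $(v,A)\mapsto \mathrm{tr}A+g_F(v)|\theta|^2$, which is just convexity of $g_F$, makes $V_{\lambda,\epsilon}$ a viscosity supersolution (Proposition~\ref{Proposition:3.1}).
\item Then $F^{-1}(V_{\lambda,\epsilon})$ is a supersolution of the \emph{linear} heat equation. It lies between $0$ and $u_\epsilon$ and is at least $\phi$ at $t=0$.
\item The Gaussian-growth comparison of Lemma~\ref{Lemma:2.3} gives $u\le F^{-1}(V_{\lambda,\epsilon})$; then let $\epsilon\to0$.
\end{itemize}
This also removes your noncompactness problem without penalizing $Z$, where the unbounded terms $g_F(w)|\nabla w|^2$ would be hard to control.

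On necessity, your separate first step ``$F'>0$'' gives no mechanism for the claimed failure of convexity near the level $r_0$. Reverse the order, as the paper does:
\begin{itemize}
\item On a maximal interval $(r_1,r_2)$ where $F'>0$, test with the V-shaped datum $\phi(x)=f_F(|x_1-F(r_0)|+F(r_0))$, $r_0<r_1$.
\item Avoid a quadratic $\psi$: \eqref{eq:F} only bounds $f_F$ by roughly $e^{Az^2}$, so $f_F(ax_1+bx_1^2)$ can violate \eqref{eq:1.1}.
\item The convexity defect $\Phi_\lambda$ is $\le 0$ for all $t$ and vanishes at $t=0$ for pairs on the same side of the kink, so $\partial_t\Phi_\lambda\le0$ at $t=0$.
\item There $\partial_tF(u)|_{t=0}=F'(\phi)\Delta\phi=g_F(x_1)$. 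So convexity of $g_F$ on $(F(r_1),F(r_2))$ follows at first order in $t$, with no $g_F''$ and no finite-difference expansion.
\item Convexity then makes $g_F$ bounded below near $F(r_1)$, hence $\log f_F'$ bounded above there. This contradicts $f_F'=1/F'(f_F)\to\infty$ if $r_1>0$ and $F'(r_1)=0$, so $r_1=0$ and $F'>0$ everywhere.
\end{itemize}
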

By Theorem~\ref{Theorem:1.1}, 
$\alpha$-convexity with $\alpha\in{\mathbb R}$ is preserved  {under} the heat flow if and only if $\alpha\le 1$, 
while it is only trivially preserved when $-\infty<\alpha<0$.
Note that $\infty$-convexity is preserved  {under} the heat flow if and only if $n=1$ (see \cite{IS01}).

Next, under a suitable restriction on $F$, 
we apply Theorem~\ref{Theorem:1.1}~(3) 
to identify the weakest and the strongest $F$-convexities 
preserved  {under} the heat flow. 
\begin{theorem}
\label{Theorem:1.2}
Let $F\in C^2((0,\infty))$ be admissible  {on} $[0,\infty)$ and $\lim_{r\to\infty}F(r)=\infty$. 
Assume that 
\begin{equation}
  \tag{F'}
  \label{eq:F'}
  \int_{F(1)}^\infty e^{-A z^2}f_F(z)\,\mathrm{d}z<\infty
  \quad \text{for any } A>0.
\end{equation}
If $F$-convexity is preserved  {under} the heat flow, then 
$F$-convexity is stronger than $1$-convexity and weaker than log-convexity. 
\end{theorem}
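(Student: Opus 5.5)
By Theorem~\ref{Theorem:1.1}~(3) (note that \eqref{eq:F'} implies \eqref{eq:F}), the hypothesis means that $F'>0$ on $(0,\infty)$ and that $g_F=(\log f_F')'=f_F''/f_F'$ is convex on $J_F$. Since $\lim_{r\to\infty}F(r)=\infty$, we have $J_F=(a,\infty)$ with $a:=F(0)\in[-\infty,\infty)$. Moreover $f:=f_F$ is a $C^2$ increasing bijection of $(a,\infty)$ onto $(0,\infty)$, with $f(z)\to0$ as $z\to a^+$ and $f(z)\to\infty$ as $z\to\infty$.

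The plan is to reduce both inclusions to two pointwise differential inequalities for $f$. First, $\mathcal C[F]\subset\mathcal C[1]$ follows once $f$ is convex on $J_F$, i.e.\ $g_F\ge0$. Indeed, if $\phi\in\mathcal C[F]$ then $F(\phi)$ is convex, so $\phi=f(F(\phi))$ is the composition of a convex function with a convex nondecreasing function (extended continuously by $f(a)=0$ when $a>-\infty$), and is therefore convex. Second, $\mathcal C[0]\subset\mathcal C[F]$ follows once $F\circ\exp$ is convex and nondecreasing on $[-\infty,\infty)$, with value $a$ at $-\infty$. Then for log-convex $\phi$ the function $F(\phi)=(F\circ\exp)(\log\phi)$ is convex; the points where $\phi=0$ are handled by the conventions $\log0=-\infty$ and $F(0)=a$. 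Convexity of $F\circ\exp$ is equivalent to concavity of its inverse $h:=\log f$ on $J_F$, that is, to $ff''\le (f')^2$, i.e.\ $g_F\le f'/f$.

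\textbf{Step 1: $g_F$ is nonincreasing with a finite limit $L\ge0$ at $\infty$.} This is the main point and the only place where \eqref{eq:F'} enters. A convex function on $(a,\infty)$ has a limit in $[-\infty,\infty]$ at $\infty$. There are three cases.
\begin{itemize}
\item If the limit is $+\infty$, convexity gives $g_F(z)\ge cz-C$ with $c>0$. Then $\log f'(z)\ge cz^2/2-C'z-C''$, so $f\ge e^{cz^2/3}$ for large $z$, which contradicts \eqref{eq:F'} with $A<c/3$.
\item If the limit is negative (including $-\infty$), then $g_F\le -c<0$ eventually. Hence $f'$ decays exponentially and $f$ stays bounded, contradicting $f\to\infty$.
\item Otherwise the limit $L$ is finite with $L\ge0$. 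A convex function with a finite limit at $\infty$ must be nonincreasing, so $g_F\ge L\ge0$ on $J_F$.
\end{itemize}
This already gives the convexity of $f$, and hence $\mathcal C[F]\subset\mathcal C[1]$.

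\textbf{Step 2: $g_F\le f'/f$.} Fix $z\in J_F$ with $g_F(z)>0$; if $g_F(z)=0$ the inequality is trivial. For $a<s<z$ we have
$$
f'(s)=f'(z)\exp\left(-\int_s^z g_F\right),
$$
and since $g_F$ is nonincreasing, $g_F(\tau)\ge g_F(z)$ for $\tau\le z$. Therefore $f'(s)\le f'(z)e^{-g_F(z)(z-s)}$. Integrating over $s\in(a,z)$ and using $f(a^+)=0$ yields
$$
f(z)\le \frac{f'(z)}{g_F(z)}\bigl(1-e^{-g_F(z)(z-a)}\bigr)\le\frac{f'(z)}{g_F(z)}.
$$
This is exactly $ff''\le(f')^2$, so $\log f$ is concave and $F\circ\exp$ is convex and increasing; hence $\mathcal C[0]\subset\mathcal C[F]$.

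I expect the main obstacle to be Step 1, namely excluding a convex $g_F$ that dips below zero before tending to $+\infty$. The growth restriction \eqref{eq:F'}, valid for every $A>0$, is precisely what rules this out. Once monotonicity of $g_F$ is known, both inclusions are elementary. The remaining care is bookkeeping at the value $0$, i.e.\ the endpoint $F(0)=a$, possibly $-\infty$, for which the continuous extensions above suffice.
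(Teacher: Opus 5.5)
Your proposal is correct and follows the paper's proof. Both invoke Theorem~\ref{Theorem:1.1}~(3), show that $g_F$ is nonincreasing (using $(\mathrm{F}')$ to exclude linear growth of $g_F$) and nonnegative (using $f_F\to\infty$), and deduce convexity of $f_F$; both then get log-concavity of $f_F$ from the monotonicity of $g_F$ to the left of a point together with $f_F(F(0^+)^+)=0$. The only real difference is that your Step~2 integrates the bound $f_F'(s)\le f_F'(z)e^{-g_F(z)(z-s)}$ directly, which is the integrated form of the paper's argument that $h_F=f_F-f_F'/g_F$ is nonincreasing, and it avoids differentiating $g_F$.
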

As a direct consequence of Theorem~\ref{Theorem:1.2}, we have:
\begin{corollary}
\label{Corollary:1.1}
Let $F\in C^2((0,\infty))$ be admissible  {on $[0,\infty)$} and  satisfy $\lim_{r\to\infty}F(r)=\infty$ and condition~\eqref{eq:F'}. 
Then $1$-convexity {\rm({\it resp.\ log-convexity})} 
is the weakest {\rm ({\it resp.\ strongest})} nontrivial $F$-convexity preserved  {under} the heat flow.
\end{corollary}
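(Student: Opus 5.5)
The plan is to combine Theorem~\ref{Theorem:1.2}, which gives the two-sided inclusion for any admissible $F$ in the class under consideration, with the elementary facts (A1) and (A2), which show that both endpoints $\Phi_1$ and $\Phi_0$ belong to that class. Throughout, the competitors are the admissible $F\in C^2((0,\infty))$ with $\lim_{r\to\infty}F(r)=\infty$ and condition~\eqref{eq:F'}.

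\emph{Step 1: every preserved $F$-convexity lies between the two endpoints.} Let $F$ be such a function, and suppose $F$-convexity is preserved under the heat flow. Theorem~\ref{Theorem:1.2} then gives
$$
{\mathcal C}[0]={\mathcal C}[\Phi_0]\subset{\mathcal C}[F]\subset{\mathcal C}[\Phi_1]={\mathcal C}[1].
$$
So every such $F$-convexity is weaker than log-convexity and stronger than $1$-convexity. This inclusion holds whether or not ${\mathcal C}[F]$ is trivial, so restricting to nontrivial $F$-convexities costs nothing.

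\emph{Step 2: the endpoints $\Phi_1$ and $\Phi_0$ belong to the class.} For $\Phi_1(r)=r-1$ we have $f_{\Phi_1}(z)=1+z$. For $\Phi_0=\log$ we have $f_{\Phi_0}(z)=e^z$. Both functions are admissible, lie in $C^2((0,\infty))$ and tend to $\infty$ as $r\to\infty$. For every $A>0$,
$$
\int_0^\infty e^{-Az^2}(1+z)\,\dee z<\infty
\quad\text{and}\quad
\int_0^\infty e^{-Az^2+z}\,\dee z<\infty,
$$
so condition~\eqref{eq:F'} holds for both.

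\emph{Step 3: the endpoints are preserved and nontrivial.} Preservation under the heat flow is exactly (A1) for convexity and (A2) for log-convexity. Alternatively, one can check \eqref{eq:1.4}: $g_{\Phi_1}\equiv0$ and $g_{\Phi_0}\equiv1$, both convex, so Theorem~\ref{Theorem:1.1}~(3) applies. For nontriviality, $x\mapsto e^{x_1}$ is a nonconstant nonnegative log-convex function on ${\mathbb R}^n$, hence also $1$-convex. Thus neither ${\mathcal C}[0]$ nor ${\mathcal C}[1]$ consists only of constants.

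\emph{Conclusion.} Combining Steps 1--3, $1$-convexity is a nontrivial preserved $F$-convexity that is weaker than every preserved $F$-convexity in the class, so it is the weakest. Likewise, log-convexity is a nontrivial preserved $F$-convexity that is stronger than every such one, so it is the strongest.

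There is no genuine obstacle, since all the analytic content sits in Theorem~\ref{Theorem:1.2}. The only point requiring care is Step 2: the endpoints must themselves satisfy the hypotheses of the corollary, in particular condition~\eqref{eq:F'} for \emph{every} $A>0$. This is what makes them admissible competitors, and it is immediate from the Gaussian decay beating $e^z$.
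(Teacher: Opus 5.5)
Your proposal is correct and follows the paper's route: the paper states the corollary as a direct consequence of Theorem~\ref{Theorem:1.2}. Your Steps~2 and~3 make explicit what the paper leaves implicit, namely that $\Phi_1$ and $\Phi_0$ themselves satisfy the hypotheses, including \eqref{eq:F'}; that they are preserved, by (A1), (A2) or by Theorem~\ref{Theorem:1.1}~(3); and that they are nontrivial.
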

\begin{remark}
\label{Remark:1.2}
{\rm (1)} 
Condition~\eqref{eq:F'} is equivalent to the existence of nontrivial $F$-convex initial data 
for which the heat flow exists globally in time. 
\vspace{3pt}
\newline
{\rm (2)}
Let $g$ be the following convex function on $[0,\infty)$:
\[
g(z):=|z-1|-1,\quad z\in[0,\infty). 
\]
Define
\[
f(z):=\int_0^z\exp\!\left(\int_0^s g(\tau)\,\mathrm{d}\tau\right)\mathrm{d} {s},
\quad 
F(r):=f^{-1}(r),
\]
for $z$, $r\in[0,\infty)$. 
Then $F$ is admissible on $[0,\infty)$ and satisfies conditions~\eqref{eq:F} and \eqref{eq:1.4}.
Hence, by Theorem~{\rm\ref{Theorem:1.1}}, $F$-convexity is preserved  {under} the heat flow. 

On the other hand, $f''$ is negative in a neighborhood of $z=1$. 
This implies that $F$-convexity is not stronger than usual convexity. 
Consequently, Theorem~{\rm\ref{Theorem:1.2}} does not hold without any restriction such as condition~\eqref{eq:F'}.
\end{remark}
\begin{remark}
\label{Remark:1.3} 
{\rm (1)}  
Let $F$ be  {an} admissible function on $[0,\infty)$ and $u$ be a nonnegative function on~${\mathbb R}^n$. 
Set $v=e^{-u}$. Then the $F$-convexity of $u$ is equivalent to the $G$-concavity of $v$, where 
$$G(s)=-F(-\log(s))\,.$$
In particular, the $\alpha$-convexity of $u$ corresponds to the $\alpha$-log-concavity of $v$.

Now, let $\phi$ be a continuous positive function  {on} ${\mathbb R}^n$ such that $T(\phi)\in(0,\infty]$. 
Set 
\[
u(x,t)=[e^{t\Delta}\phi](x),\quad
v(x,t)=\exp(-u(x,t)), 
\]
for $(x,t)\in{\mathbb R}^n\times[0,T(\phi))$. 
Then, the preservation of $F$-convexity  {under} the heat flow with positive initial data is reduced to the preservation of $G$-concavity
 {for the parabolic equation} 
\begin{equation}
\label{eq:1.5}
\partial_t v=\Delta v-\frac{|\nabla v|^2}{v}\quad\text{in}\quad{\mathbb R}^n\times(0,T(\phi)).
\end{equation}
On the other hand, $\alpha$-log-concavity is preserved  {under} the heat flow if and only if $0 \le \alpha \le  {1/2}$ {\rm ({\it see} \cites{IST01, IST03, IST05})},
whereas $\alpha$-convexity is preserved  {under} the heat flow if and only if $ {0} \le \alpha \le 1$.
This difference originates from the nonlinear term $-|\nabla v|^2/v$ in equation~\eqref{eq:1.5}.
\vspace{3pt}
\newline
{\rm (2)} 
The results in \cite{IST05}*{Theorem~1.5~(A4), Proposition~3.1} suggest that, 
if $n\ge 2$ and $F$-convexity is strictly weaker than the usual convexity, then there exists an $F$-convex function $\phi$ on ${\mathbb R}^n$ 
for which $T(\phi)\in(0,\infty]$ and $e^{t\Delta}\phi$ is not $F$-convex on ${\mathbb R}^n$ for some $t\in(0,T(\phi))$. 
However, this problem remains open. 
\end{remark}

We prove Theorems~\ref{Theorem:1.1} and \ref{Theorem:1.2} in Section~3. 
In Section~4, we adapt the arguments in the proofs of Theorems~\ref{Theorem:1.1} and~\ref{Theorem:1.2} 
to study the preservation of $F$-convexity under the heat flow without any sign restriction on the solutions.
In particular, we show that an analogue of Theorem~\ref{Theorem:1.1} holds for functions bounded from below.
In this setting, under a suitable growth condition on $f_F$ analogous to~(F'), we prove that the only $F$-convexity preserved under the heat flow is $1$-convexity.
Finally, in Section~5, we apply the arguments in~\cite{IST05} to study the preservation of $F$-convexity under the Dirichlet heat flow in convex domains.
\medskip

We outline the main ideas of the proofs of Theorems~\ref{Theorem:1.1} and \ref{Theorem:1.2}.
In~\cite{IST05}, the preservation of $F$-concavity  {under} the Dirichlet heat flow with bounded initial data was studied. 
The boundedness of the initial data is not restrictive in the study of the preservation of $F$-concavity  {under} the Dirichlet heat flow. 
Indeed, in the case of the heat flow, it follows that
\[
\left[e^{t\Delta}\phi\right](x)
=\lim_{k\to\infty}\int_{{\mathbb R}^n}\Gamma_n(x-y,t)\min\{\phi(y),k\}\,\mathrm{d}y
\]
for any nonnegative function $\phi$ on ${\mathbb R}^n$. 
Furthermore, if $\phi$ is $F$-concave on ${\mathbb R}^n$, then $\min\{\phi,k\}$ is also $F$-concave on ${\mathbb R}^n$ for any $k>0$.
Letting $k\to\infty$, we can study the preservation of $F$-concavity under the heat flow without assuming the boundedness of the initial data.
In contrast, if $\phi$ is $F$-convex on ${\mathbb R}^n$, $\min\{\phi,k\}$ is not necessarily $F$-convex on ${\mathbb R}^n$ for $k>0$.
This means that the arguments in \cite{IST05} are not directly applicable to the study of the preservation of convexity properties  {under} the heat flow. 

In this paper, we modify the arguments in \cite{INS}. 
More precisely, for any $\lambda\in(0,1)$ and any nonnegative continuous function $\phi$ on ${\mathbb R}^n$ satisfying
\[
T(\phi)\in(0,\infty],\qquad
\lim_{|x|\to\infty}\left[e^{t\Delta}\phi\right](x)=\infty
\quad \text{for } t\in(0,T(\phi)),
\]
 {we show that} 
the function $U_\lambda$ defined by 
\begin{align*}
U_\lambda(x,t):=\inf\Bigl\{ F^{-1}\bigl((1-\lambda)F([e^{t\Delta}\phi](x_0))
+\lambda F([e^{t\Delta}\phi](x_1))\bigr) & \\
\, : \,
x=(1-\lambda)x_0+\lambda x_1,\, x_0,x_1\in{\mathbb R}^n \Bigr\} & 
\end{align*}
is a viscosity supersolution to the heat equation in ${\mathbb R}^n\times(0,T(\phi))$ under condition~\eqref{eq:1.4}. 
Then, by the comparison principle for viscosity solutions of the heat equation and a suitable approximation of the initial data, 
we prove the preservation of $F$-convexity under the heat flow assuming conditions~\eqref{eq:F} and \eqref{eq:1.4}. 
 {To this end}, it is necessary to obtain growth estimates of solutions at spatial infinity. 
On the other hand, we apply \cite{IST05}*{Proposition~4.8} (see also \cite{Kol}) to prove that 
if $F$-convexity is preserved  {under} the heat flow, then $F$ satisfies condition~\eqref{eq:1.4}. 
 {This argument yields} the proof of Theorem~\ref{Theorem:1.1}. 
Theorem~\ref{Theorem:1.2} follows from Theorem~\ref{Theorem:1.1}. 
\medskip

The rest of this paper is organized as follows.
In Section~2, we recall several lemmas on $F$-convexity and on viscosity solutions, 
and prove a key proposition  {used in} the proof of Theorem~\ref{Theorem:1.1}.
In Section~3, we prove Theorems~\ref{Theorem:1.1} and \ref{Theorem:1.2}.  
In Section~4, we study the preservation of $F$-convexity under the heat flow without any sign restriction on the solutions.
In Section~5, we apply the arguments in \cite{IST05} to study the $F$-convexities preserved under the Dirichlet heat flow.
%%%%%%%%%%%%%%%%%%%%%%
%%%%%%%%%%%%%%%%%%%%%%
\section{Preliminaries}
%%%%%%%%%%%%%%%%%%%%%%
%%%%%%%%%%%%%%%%%%%%%%
We first recall two lemmas on the order structure of $F$-convexities. 
\begin{lemma}
\label{Lemma:2.1}
Let $F_1$ and $F_2$ be admissible on $[0,\infty)$. 
Then $F_1$-convexity is weaker than $F_2$-convexity 
if and only if $F_1\circ f_{F_2}$ is convex on $J_{F_2}$ 
{\rm (or, equivalently, $F_2\circ f_{F_1}$ is concave on $J_{F_1}$)}.
\end{lemma}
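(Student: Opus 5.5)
The plan is to prove both implications straight from the definitions, with $h:=F_1\circ f_{F_2}$. This $h$ is a strictly increasing function on the interval $J_{F_2}$, continuous there, and it satisfies $F_1(r)=h(F_2(r))$ for $r>0$. The equivalence with concavity of $F_2\circ f_{F_1}=h^{-1}$, defined on $h(J_{F_2})=J_{F_1}$, is the elementary fact that the inverse of a strictly increasing convex function on an interval is concave. So only the first formulation needs proof, reading ``weaker'' as $\mathcal C[F_2]\subset\mathcal C[F_1]$.

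\emph{Sufficiency.} Assume $h$ is convex, and let $f\in\mathcal C[F_2]$, so that $F_2(f)$ is convex. If $f>0$ everywhere, then monotonicity and convexity of $h$ give
\[
F_1(f(z_\lambda))=h(F_2(f(z_\lambda)))\le h\bigl((1-\lambda)F_2(f(x))+\lambda F_2(f(y))\bigr)\le (1-\lambda)F_1(f(x))+\lambda F_1(f(y)),
\]
where $z_\lambda=(1-\lambda)x+\lambda y$. Points where $f$ vanishes need separate care. Extend $h$ to $\overline{J_{F_2}}\cap[-\infty,\infty)$ by $h(F_2(0))=F_1(0)$, using the limits in Definition~\ref{Definition:1.1}. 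The extension remains increasing and convex, because a convex increasing function on an open interval extends convexly to its left endpoint by its limit. With the stated conventions the same chain of inequalities then holds, including when some value equals $-\infty$.

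\emph{Necessity.} Assume $\mathcal C[F_2]\subset\mathcal C[F_1]$. Take $a<b$ in $J_{F_2}$ and $\lambda\in(0,1)$, and test with one-dimensional affine data. Let $\ell$ be an affine function on $\mathbb R^n$ with $\ell(x)=a$ and $\ell(y)=b$. Then $f:=f_{F_2}(\ell)$ would be $F_2$-convex, and $F_1$-convexity of $f$ gives $h((1-\lambda)a+\lambda b)\le(1-\lambda)h(a)+\lambda h(b)$.

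The main obstacle is that $\ell$ takes values outside $J_{F_2}$ when $J_{F_2}$ is bounded below. To handle this, I would use instead $\max\{\ell,c\}$ with $c\in J_{F_2}$ and $c<a$. This function is still convex, has values in $J_{F_2}$, and agrees with $\ell$ on the segment $[x,y]$. Composing with $f_{F_2}$ then yields a genuine positive $F_2$-convex function, and the inequality follows. Convexity of $h$ on $J_{F_2}$ is obtained since $a,b,\lambda$ were arbitrary.
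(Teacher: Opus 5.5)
Your sufficiency argument (including the extension of $h$ to the left endpoint of $J_{F_2}$) and the reduction of the concave formulation to the convex one via $h^{-1}=F_2\circ f_{F_1}$ are fine. The gap is in the necessity step, at the claim that $\max\{\ell,c\}$ ``has values in $J_{F_2}$''. You only dealt with $J_{F_2}$ being bounded below. But $\max\{\ell,c\}\ge\ell$, and a nonconstant affine function is unbounded above. So the claim fails whenever $J_{F_2}$ is bounded above, i.e.\ whenever $\lim_{r\to\infty}F_2(r)<\infty$.

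No other test function can repair this, because in that case the ``only if'' direction is false. By Remark~\ref{Remark:1.1}, $\mathcal{C}[F_2]$ then consists only of the nonnegative constants. Hence $\mathcal{C}[F_2]\subset\mathcal{C}[F_1]$ holds for every admissible $F_1$, while $F_1\circ f_{F_2}$ need not be convex. For example, take $F_2=\Phi_{-1}$ and $F_1=\Phi_{-2}$. Then $J_{F_2}=(-\infty,1)$, $f_{F_2}(z)=(1-z)^{-1}$, and $F_1\circ f_{F_2}(z)=z-z^2/2$, which is strictly concave. Yet $(-1)$-convexity and $(-2)$-convexity are both trivial, so $\mathcal{C}[\Phi_{-1}]=\mathcal{C}[\Phi_{-2}]$.

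What is missing is the hypothesis $\lim_{r\to\infty}F_2(r)=\infty$ (equivalently, $J_{F_2}$ unbounded above) in the necessity part. Under it, $[c,\infty)\subset J_{F_2}$, so $f_{F_2}(\max\{\ell,c\})$ is a genuine positive $F_2$-convex function and your argument goes through unchanged. You should state this restriction explicitly rather than claim necessity for every admissible $F_2$.

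The lemma as printed (whose proof the paper omits) has the same defect, but this does not affect the paper's applications. The proof of Theorem~\ref{Theorem:1.2} uses only the ``if'' direction. The ``only if'' direction is invoked in Remark~\ref{Remark:1.2}~(2) for an $F$ with $\lim_{r\to\infty}F(r)=\infty$.
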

\begin{lemma}
\label{Lemma:2.2}
Let $F_1$ and $F_2$ be admissible on $[0,\infty)$. 
Then the relation ${\mathcal C}[F_1]={\mathcal C}[F_2]$ holds if and only if 
there exists a pair $(A,B)\in (0,\infty)\times{\mathbb R}$ such that 
\[
F_1(r)=AF_2(r)+B\quad\text{for}\quad r\in (0,\infty).
\]
\end{lemma}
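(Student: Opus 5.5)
The statement to prove is Lemma~\ref{Lemma:2.2}: ${\mathcal C}[F_1]={\mathcal C}[F_2]$ holds exactly when $F_1=AF_2+B$ on $(0,\infty)$ for some $A>0$ and $B\in{\mathbb R}$. I would derive it from Lemma~\ref{Lemma:2.1}, applied in both directions.

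For sufficiency, suppose $F_1=AF_2+B$ on $(0,\infty)$. Passing to the limit $r\to0^+$ and using admissibility gives $F_1(0)=AF_2(0)+B$, with the convention $A\cdot(-\infty)+B=-\infty$. Hence $F_1(f)=AF_2(f)+B$ pointwise for every nonnegative $f$. Since $A>0$, a function $h$ is convex if and only if $Ah+B$ is convex, and this holds also for extended-valued $h$ under the stated conventions. Therefore ${\mathcal C}[F_1]={\mathcal C}[F_2]$.

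For necessity, suppose ${\mathcal C}[F_1]={\mathcal C}[F_2]$. Then each of the two convexities is weaker than the other.
\begin{itemize}
\item Since $F_1$-convexity is weaker than $F_2$-convexity, Lemma~\ref{Lemma:2.1} makes $h:=F_1\circ f_{F_2}$ convex on the interval $J_{F_2}$.
\item Since $F_2$-convexity is weaker than $F_1$-convexity, the equivalent form of Lemma~\ref{Lemma:2.1} (with the roles of $F_1$ and $F_2$ swapped) makes the same function $F_1\circ f_{F_2}$ concave on $J_{F_2}$.
\end{itemize}
A function on an interval that is both convex and concave is affine. Moreover $J_{F_2}=F_2((0,\infty))$ is an open interval, because $F_2$ is continuous and strictly increasing. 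So $h(z)=Az+B$ on $J_{F_2}$. We have $A\ge 0$, and in fact $A>0$, because $h$ is a composition of strictly increasing maps. Substituting $z=F_2(r)$ for $r\in(0,\infty)$ gives $F_1(r)=AF_2(r)+B$.

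The only delicate point is that Lemma~\ref{Lemma:2.1} must be used in its precise form: once to get convexity of $F_1\circ f_{F_2}$, and once, through the ``equivalently'' clause with roles swapped, to get concavity of the same function. Everything else is routine. If one prefers not to rely on that clause, the same conclusion follows from a short argument. The second application gives $F_2\circ f_{F_1}$ convex on $J_{F_1}$, and this is the inverse of $h$. The inverse of an increasing convex function is concave, so $h$ is again both convex and concave, hence affine.
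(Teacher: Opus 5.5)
Your argument follows the route the paper intends. The paper omits the proof and defers to Lemma~2.5 of IST05, which is obtained from the order criterion in the same way. Sufficiency is by direct substitution, and your handling of $r=0$ and of the value $-\infty$ is correct. Necessity applies Lemma~\ref{Lemma:2.1} in both directions, so $h=F_1\circ f_{F_2}$ is convex and concave on the open interval $J_{F_2}$, hence affine with positive slope. Relative to Lemma~\ref{Lemma:2.1} as stated, your deduction is valid.

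However, the necessity direction fails when the $F_i$ are bounded above, and with it the statement; the failing step is exactly your appeal to Lemma~\ref{Lemma:2.1}. Take $F_1=\Phi_{-1}$ and $F_2=\Phi_{-2}$, i.e.\ $F_1(r)=1-r^{-1}$ and $F_2(r)=\frac12(1-r^{-2})$. By Remark~\ref{Remark:1.1} both classes consist only of the nonnegative constants, so ${\mathcal C}[F_1]={\mathcal C}[F_2]$; the paper's own hierarchy records that all negative power convexities coincide. Yet $F_1\circ f_{F_2}(z)=1-\sqrt{1-2z}$ on $(-\infty,\frac12)$ has second derivative $(1-2z)^{-3/2}>0$. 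So it is not affine, and no pair $(A,B)$ exists. What breaks is your second application, which would need $F_2\circ f_{F_1}(w)=\frac12\bigl(1-(1-w)^2\bigr)$ to be convex; it is concave.

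The ``only if'' half of Lemma~\ref{Lemma:2.1} is proved by a test function. Set $c:=F_2(0^+)$, and let $f(x):=f_{F_2}(x_1)$ for $x_1>c$ and $f(x):=0$ for $x_1\le c$. Then $F_2(f(x))=\max\{x_1,c\}$ is convex, so $f\in{\mathcal C}[F_2]\subset{\mathcal C}[F_1]$. Convexity of $F_1(f)$ in $x_1\in(c,\infty)$ is then exactly convexity of $F_1\circ f_{F_2}$ on $J_{F_2}$. This requires $J_{F_2}=(c,\infty)$, i.e.\ $\lim_{r\to\infty}F_2(r)=\infty$.

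So you need the extra hypothesis $\lim_{r\to\infty}F_i(r)=\infty$ for one $i$. If it holds for one $i$ and the classes coincide, this test function shows both classes are nontrivial, so by Remark~\ref{Remark:1.1} it holds for both. Both applications of Lemma~\ref{Lemma:2.1} then become legitimate, and the rest of your proof goes through unchanged. Without that hypothesis, the correct statement is: ${\mathcal C}[F_1]={\mathcal C}[F_2]$ if and only if either both $F_i$ are bounded above, or $F_1=AF_2+B$ on $(0,\infty)$ with $A>0$.
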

The proofs of Lemmas~\ref{Lemma:2.1} and \ref{Lemma:2.2} are the same as in \cite{IST05}*{Lemmas 2.4 and 2.5}, respectively. 
We omit the proofs.

Next, we consider a nonlinear parabolic equation
\begin{equation}
\tag{E}
\label{eq:E} 
\partial_t u=H(x,t,u,\nabla u,\nabla^2 u)
\quad\text{in}\quad{\mathbb R}^n\times(0,T), 
\end{equation}
where $T\in(0,\infty]$.
We impose the following assumptions on $H$:
\begin{itemize}
  \item[(H1)] 
  $H\in C({\mathbb R}^n\times(0,T)\times{\mathbb R}\times{\mathbb R}^n\times{\mathbb S}_n)$;

  \item[(H2)] 
  $H$ is a degenerate elliptic operator, that is, the map
  \[
   {{\mathbb S}_n}\ni A\mapsto H(x,t,v,\theta,A)
  \]
  is non-decreasing for any fixed $(x,t,v,\theta)\in {\mathbb R}^n\times(0,T)\times{\mathbb R}\times{\mathbb R}^n$.
\end{itemize}
Here ${\mathbb S}_n$ denotes the set of all real $n\times n$ symmetric matrices.

We recall the notions of viscosity subsolution, supersolution, and solution to equation~\eqref{eq:E}.
An upper semicontinuous function $v$ on ${\mathbb R}^n\times(0,T)$ 
is called a \emph{viscosity subsolution} of equation~\eqref{eq:E}
if, for any $(\xi,\tau)\in{\mathbb R}^n\times(0,T)$, 
one has
$$
\partial_t\psi(\xi,\tau)-H(\xi,\tau,\psi(\xi,\tau),\nabla\psi(\xi,\tau),\nabla^2\psi(\xi,\tau))\le 0
$$
for every test function $\psi\in C^{2;1}({\mathbb R}^n\times(0,T))$ that touches $v$ from above at $(\xi,\tau)$, that is,
$$
\psi(\xi,\tau)=v(\xi,\tau),\qquad  \text{$\psi\ge v$ in a neighborhood of $(\xi,\tau)$}.
$$
Analogously, a lower semicontinuous function $v$ on ${\mathbb R}^n\times(0,T)$ 
is called a \emph{viscosity supersolution} of equation~\eqref{eq:E} 
if, for any $(\xi,\tau)\in{\mathbb R}^n\times(0,T)$, 
one has 
$$
\partial_t\psi(\xi,\tau)-H(\xi,\tau,\psi(\xi,\tau),\nabla\psi(\xi,\tau),\nabla^2\psi(\xi,\tau))\ge 0
$$
for every test function $\psi\in C^{2;1}({\mathbb R}^n\times(0,T))$ that touches $v$ from below at $(\xi,\tau)$, that is, 
$$
\psi(\xi,\tau)=v(\xi,\tau),\qquad \text{$\psi\le v$ in a neighborhood of $(\xi,\tau)$.}
$$
A continuous function $v$ on ${\mathbb R}^n\times(0,T)$ is called a \emph{viscosity solution} of equation~\eqref{eq:E}
if it is both a~viscosity subsolution and supersolution.

Next, we prove a comparison principle for viscosity solutions of the heat  {equation}, which will be used in the next sections.
\begin{lemma}
\label{Lemma:2.3}
Let $T>0$. 
Let $u\in C({\mathbb R}^n\times[0,T))$ be a viscosity subsolution of the heat equation in ${\mathbb R}^n\times(0,T)$ 
such that $u\le 0$ on ${\mathbb R}^n\times\{0\}$. 
Assume that 
\begin{equation}
\label{eq:2.1}
u(x,t)\le a e^{A|x|^2}\quad\text{in}\quad{\mathbb R}^n\times[0,T)
\end{equation}
for some $A>0$ and $a>0$.
Then $u\le 0$ in ${\mathbb R}^n\times[0,T)$. 
\end{lemma}
{\bf Proof.}
The proof is similar to that of \cite{E}*{Section~2.3, Theorem~6}. 
However, for the sake of completeness, we include the argument here.

Assume that $4AT<1$. 
Let $\epsilon>0$ be such that $4A(T+\epsilon)<1$. 
For any fixed $y\in{\mathbb R}^n$ and $\mu>0$, define
\[
v(x,t):=\mu(T+\epsilon-t)^{-\frac{n}{2}}
\exp\!\left(\frac{|x-y|^2}{4(T+\epsilon-t)}\right)
\]
for $(x,t)\in{\mathbb R}^n\times[0,T)$, which is a solution of the heat equation in ${\mathbb R}^n\times(0,T)$. 
Then the function~$w$ defined by 
\[
w(x,t):=u(x,t)-v(x,t),
\quad (x,t)\in{\mathbb R}^n\times[0,T)
\]
is a viscosity subsolution of the heat equation in ${\mathbb R}^n\times(0,T)$. 
Let $r>0$. 
By \eqref{eq:2.1}, 
for $(x,t)\in \partial B(y,r)\times(0,T)$, we have
\begin{align*}
w(x,t)
&\le a e^{A|x|^2}
-\mu(T+\epsilon-t)^{-\frac{n}{2}}
\exp\!\left(\frac{r^2}{4(T+\epsilon-t)}\right)\\
&\le a e^{A(|y|+r)^2}
-\mu(T+\epsilon)^{-\frac{n}{2}}
\exp\!\left(\frac{r^2}{4(T+\epsilon)}\right).
\end{align*}
Set
\[
\gamma:=\frac{1}{4(T+\epsilon)}-A>0.
\]
Then, taking $r$ sufficiently large, we obtain
$$ 
w(x,t)\le ae^{A(|y|+r)^2}-\mu(4(A+\gamma))^{\frac{n}{2}}e^{(A+\gamma)r^2}\le 0
$$
for $(x,t)\in\partial B(y,r)\times(0,T)$. 
We now apply the comparison principle (see, e.g., \cite{CIL}*{Theorem~8.2}) for the heat equation in $B(y,r)\times[0,T)$ 
to conclude that $w\le 0$ in $B(y,r)\times[0,T)$, that is,
\[
u(x,t)\le \mu(T+\epsilon-t)^{-\frac{n}{2}}
\exp\!\left(\frac{|x-y|^2}{4(T+\epsilon-t)}\right),
\quad (x,t)\in B(y,r)\times[0,T).
\]
Setting $x=y$, we obtain
\[
u(y,t)\le \mu(T+\epsilon-t)^{-\frac{n}{2}},
\quad t\in[0,T).
\]
Letting $\mu\to 0^+$, we conclude that 
$u(y,t)\le 0$ for all $t\in[0,T)$. 
Since $y\in{\mathbb R}^n$ is arbitrary, we obtain $u\le 0$ in ${\mathbb R}^n\times[0,T)$. 
Thus Lemma~\ref{Lemma:2.3} follows in the case where $4AT<1$.

In the case where $4AT\ge 1$, 
we repeatedly apply the argument above on the time intervals $[0,T_1)$, $[T_1,2T_1)$, $\dots$, 
where $4AT_1<1$, and obtain $u\le 0$ in ${\mathbb R}^n\times[0,T)$. 
This completes the proof of Lemma~\ref{Lemma:2.3}.
$\Box$
%
%%%%%%%%%%%%%%%%%%%%%%
%%%%%%%%%%%%%%%%%%%%%%
\section{Proof of Theorems~\ref{Theorem:1.1} and \ref{Theorem:1.2}}
%%%%%%%%%%%%%%%%%%%%%%
%%%%%%%%%%%%%%%%%%%%%%
 {Throughout the rest of the paper, let $F$ be admissible on $[0,\infty)$, unless otherwise stated.}

 {We first prove a proposition concerning} solutions to equation~\eqref{eq:E}, which is crucial for the proof of Theorem~\ref{Theorem:1.1}.
\begin{proposition}
\label{Proposition:3.1}
Assume conditions~{\rm (H1)} and {\rm (H2)}. 
Let $u$ be a classical solution of equation~\eqref{eq:E}, that is, 
$u\in C^{2,1}({\mathbb R}^n\times(0,T))\cap C({\mathbb R}^n\times[0,T))$ and $u$ satisfies equation~\eqref{eq:E} pointwise, 
such that 
\begin{equation}
\label{eq:3.1}
\lim_{|x|\to\infty}u(x,t)=\infty,\quad t\in(0,T). 
\end{equation}
For any $\lambda\in(0,1)$, define 
\begin{equation}
\label{eq:3.2}
U_\lambda(x,t):=\inf\Bigl\{(1-\lambda)u(x_0,t)+\lambda u(x_1,t)
:\,x=(1-\lambda)x_0+\lambda x_1,\,x_0,x_1\in{\mathbb R}^n\Bigr\}
\end{equation}
for $(x,t)\in{\mathbb R}^n\times[0,T)$. 
Then $U_\lambda$ is a viscosity supersolution of equation~\eqref{eq:E} if the following property holds.
\begin{itemize}
  \item[{\rm (H3)}] 
  For any fixed $(t,\theta)\in(0,T)\times{\mathbb R}^n$, 
  the function 
  \[
  {\mathbb R}^n\times{\mathbb R}\times{\mathbb S}_n\ni (x,v,A)\mapsto H(x,t,v,\theta,A)
  \]
  is convex. 
\end{itemize}
\end{proposition}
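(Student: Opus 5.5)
The plan is the standard Alvarez--Lasry--Lions / Ishige--Salani style argument for infimal convolutions. The skeleton has four steps: show the infimum is attained, touch $U_\lambda$ from below, translate this into pointwise inequalities for $u$ at the minimizers, and close with convexity of $H$.

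\emph{Step 1 (the infimum is attained, and $U_\lambda$ is lower semicontinuous).} Fix $(x,t)$ with $t\in(0,T)$ and write $x_0 = x_1 - (x_1 - x)/(1-\lambda)$ in terms of $x_1$. By \eqref{eq:3.1} and continuity, $u(\cdot,t)$ is bounded below. Along any minimizing sequence with $|x_1|\to\infty$ we also have $|x_0|\to\infty$ (or the reverse), so the value tends to $\infty$. Hence minimizing pairs stay bounded and the infimum is attained at some $(x_0,x_1)$.

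To get lower semicontinuity, I also need local uniformity of \eqref{eq:3.1} in $t$. I expect this to follow from continuity of $u$ on compact time intervals combined with \eqref{eq:3.1}, though this is the delicate point. If only pointwise divergence is available, I would argue along sequences $(x^k,t^k)\to(x,t)$: take minimizers $(x_0^k,x_1^k)$, show they are bounded by a contradiction argument, extract convergent subsequences, and conclude $\liminf U_\lambda(x^k,t^k)\ge U_\lambda(x,t)$.

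\emph{Step 2 (touching from below).} Let $\psi\in C^{2;1}$ touch $U_\lambda$ from below at $(\xi,\tau)$, and let $(x_0,x_1)$ be a minimizing pair for $U_\lambda(\xi,\tau)$. For $(y,s)$ near $(0,\tau)$ and any $h,k\in\mathbb R^n$, the pair $(x_0+y+\lambda h,\; x_1+y-(1-\lambda)h)$ is admissible for $\xi+y$. This gives
\[
\psi(\xi+y,s)\le (1-\lambda)u(x_0+y+\lambda h,s)+\lambda u(x_1+y-(1-\lambda)h,s),
\]
with equality at $y=h=0$, $s=\tau$. Reading this as a minimum at the origin yields three consequences:
\begin{itemize}
\item[(a)] Taking $h=0$ and comparing the $y$-derivatives at the minimum, together with the first-order condition in $h$, gives $\nabla\psi(\xi,\tau)=\nabla u(x_0,\tau)=\nabla u(x_1,\tau)=:\theta$.
\item[(b)] Taking the first-order condition in $s$ (an interior time), gives $\partial_t\psi=(1-\lambda)\partial_t u(x_0,\tau)+\lambda\partial_t u(x_1,\tau)$.
\item[(c)] The second-order condition in $(y,h)$ gives the matrix inequality
\[
\begin{pmatrix}\nabla^2\psi & 0\\ 0&0\end{pmatrix}\le (1-\lambda)\begin{pmatrix} I\\ \lambda I\end{pmatrix}A_0\begin{pmatrix} I & \lambda I\end{pmatrix}+\lambda\begin{pmatrix} I\\ -(1-\lambda) I\end{pmatrix}A_1\begin{pmatrix} I & -(1-\lambda) I\end{pmatrix},
\]
where $A_i=\nabla^2u(x_i,\tau)$. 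Its $(1,1)$ block gives $\nabla^2\psi\le(1-\lambda)A_0+\lambda A_1$.
\end{itemize}
Finer information from (c) (for instance a harmonic-mean bound) is not needed, since $H$ is monotone by (H2). Note that I only need $\nabla^2\psi\le(1-\lambda)A_0+\lambda A_1$, and this already follows from the $h=0$ restriction.

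\emph{Step 3 (conclusion via convexity).} Using (H2), (H3), and $\psi(\xi,\tau)=(1-\lambda)u(x_0,\tau)+\lambda u(x_1,\tau)$, we get
\[
H(\xi,\tau,\psi,\theta,\nabla^2\psi)\le H\bigl(\xi,\tau,\psi,\theta,(1-\lambda)A_0+\lambda A_1\bigr)\le (1-\lambda)H(x_0,\tau,u(x_0),\theta,A_0)+\lambda H(x_1,\tau,u(x_1),\theta,A_1).
\]
The second inequality uses joint convexity in $(x,v,A)$ with $\theta$ fixed, together with $\xi=(1-\lambda)x_0+\lambda x_1$. By the equation at $x_0$ and $x_1$, the right-hand side equals $(1-\lambda)\partial_tu(x_0,\tau)+\lambda\partial_tu(x_1,\tau)=\partial_t\psi(\xi,\tau)$, which is exactly the supersolution inequality.

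The main obstacle I expect is Step 1: attainment and lower semicontinuity, in particular controlling minimizers uniformly near $(\xi,\tau)$ using only \eqref{eq:3.1} and continuity. Once attainment is established, the calculus in Steps 2 and 3 is routine.
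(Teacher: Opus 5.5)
Your proposal is correct and is essentially the paper's proof. The paper likewise takes a minimizing pair $(x_0^*,x_1^*)$ and gets $\nabla u(x_0^*,t^*)=\nabla u(x_1^*,t^*)=\theta$, by Lagrange multipliers rather than your first-order condition in $h$. It then uses (H3) to check the supersolution inequality at $(x^*,t^*)$ for the translate $\varphi(x,t)=(1-\lambda)u(x_0^*+x-x^*,t)+\lambda u(x_1^*+x-x^*,t)$, which touches $U_\lambda$ from above, and leaves implicit the squeeze $\psi\le U_\lambda\le\varphi$ together with (H2), which your $h=0$ restriction makes explicit.

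For Step 1, the paper only asserts that continuity of $U_\lambda$ follows from \eqref{eq:3.1}. Your expectation that pointwise divergence plus continuity of $u$ gives divergence locally uniformly in $t$ is false in general. A continuous $u$ with a sufficiently deep dip at a point escaping to infinity as $t\to t_0$ satisfies \eqref{eq:3.1} for each $t$, yet $U_\lambda(x,t)\to-\infty$ as $t\to t_0$. Lower semicontinuity therefore needs \eqref{eq:3.1} locally uniformly in $t$, plus a local-in-time lower bound for $u$; both hold in the paper's application because of the added term $\epsilon|x|^2$.
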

{\bf Proof.}
Let $\lambda\in(0,1)$. 
It follows from \eqref{eq:3.1} that $U_\lambda\in C({\mathbb R}^n\times[0,T))$. 
Furthermore, for any $(x^*,t^*)\in{\mathbb R}^n\times(0,T)$, 
there exist $x_0^*,x_1^*\in{\mathbb R}^n$ such that 
\begin{equation}
\label{eq:3.3}
x^*=(1-\lambda)x_0^*+\lambda x_1^*,\quad
U_*:= {U_\lambda}(x^*,t^*)=(1-\lambda)u_0^*+\lambda u_1^*,
\end{equation}
where $u_0^*:=u(x_0^*,t^*)$ and $u_1^*:=u(x_1^*,t^*)$. 
Then, by the method of Lagrange multipliers, 
there exists $\theta\in{\mathbb R}^n$ such that 
\begin{equation}
\label{eq:3.4}
(\nabla u)(x_0^*,t^*)=(\nabla u)(x_1^*,t^*)=\theta. 
\end{equation}
Define a function $\varphi$ on ${\mathbb R}^n\times(0,T)$ by
\[
\varphi(x,t):=(1-\lambda)u(x_0^*+x-x^*,t)
+\lambda u(x_1^*+x-x^*,t),
\quad (x,t)\in{\mathbb R}^n\times(0,T).
\]
It follows from \eqref{eq:3.3} that
\[
(1-\lambda)(x_0^*+x-x^*)+\lambda(x_1^*+x-x^*)
=(1-\lambda)x_0^*+\lambda x_1^*+x-x^*=x,
\quad x\in{\mathbb R}^n,
\]
which, together with \eqref{eq:3.2} and \eqref{eq:3.4}, implies that 
\begin{equation}
\label{eq:3.5}
\left\{
\begin{aligned}
 & U_\lambda(x,t)\le\varphi(x,t)\quad \text{for } (x,t)\in{\mathbb R}^n\times(0,T),\\
 & \varphi(x^*,t^*)=(1-\lambda)u(x_0^*,t^*)+\lambda u(x_1^*,t^*)=U_*,\\
 & (\nabla\varphi)(x^*,t^*)
 =(1-\lambda)(\nabla u)(x_0^*,t^*)
 +\lambda(\nabla u)(x_1^*,t^*)
 =\theta.
\end{aligned}
\right.
\end{equation}
Then, by condition~(H3), \eqref{eq:3.3}, and \eqref{eq:3.5}, 
we obtain 
\begin{align*}
 & (\partial_t\varphi)(x^*,t^*)
 -H\bigl(x^*,t^*,\varphi(x^*,t^*),
 (\nabla\varphi)(x^*,t^*),
 (\nabla^2\varphi)(x^*,t^*)\bigr)\\
 & =(1-\lambda)(\partial_t u)(x_0^*,t^*)
 +\lambda(\partial_t u)(x_1^*,t^*)\\
 & \qquad
 -H\bigl(x^*,t^*,(1-\lambda)u_0^*+\lambda u_1^*,
 (1-\lambda)\theta+\lambda\theta,
 (1-\lambda)A_0+\lambda A_1\bigr)\\
 & =(1-\lambda)H(x_0^*,t^*,u_0^*,\theta,A_0)
 +\lambda H(x_1^*,t^*,u_1^*,\theta,A_1)\\
 & \qquad
-H\bigl(x^*,t^*,(1-\lambda)u_0^*+\lambda u_1^*,
 \theta,(1-\lambda)A_0+\lambda A_1\bigr)\ge 0,
\end{align*}
where $A_0:=(\nabla^2 u)(x_0^*,t^*)$ and 
$A_1:=(\nabla^2 u)(x_1^*,t^*)$. 
Consequently, $U_\lambda$ is a viscosity supersolution of equation~\eqref{eq:E}, 
and the proof is complete.
$\Box$
\medskip

Now we are ready to prove Theorem~\ref{Theorem:1.1}.
\medskip

\noindent
{\bf Proof of Theorem~\ref{Theorem:1.1}.}
Assertion~(1) follows immediately from Remark~\ref{Remark:1.1}. 

We prove assertion~(2). 
Let $F$ be admissible on $[0,\infty)$ such that $\lim_{r\to\infty}F(r)=\infty$. 
Then $J_F=F((0,\infty))=(F(0^+),\infty)$, where $F(0^+):=\lim_{r\to 0^+}F(r)$.
Assume \eqref{eq:1.3}. 
Let $\phi$ be a  {nonconstant $F$-convex function} on ${\mathbb R}^n$. 
Since $F(\phi)$ is not constant and is continuous on ${\mathbb R}^n$, 
there exist $x_0\in{\mathbb R}^n$, $\theta_0\in{\mathbb S}^{n-1}$, $r_0>0$, $\delta_0>0$, and $\kappa>0$ such that 
\[
F(\phi(x_0+\theta r_0))\ge F(\phi(x_0))+\kappa
\]
for all $\theta\in{\mathbb S}^{n-1}$ with $|\theta-\theta_0|<\delta_0$. 
Since $F(\phi)$ is convex on ${\mathbb R}^n$, we obtain
\[
F(\phi(x_0+\theta r))
\ge F(\phi(x_0))+\kappa\frac{r}{r_0}
\]
for all $\theta\in{\mathbb S}^{n-1}$ with $|\theta-\theta_0|<\delta_0$ and all $r\ge r_0$. 
This implies
\[
\phi(x_0+\theta r)\ge 
f_F\!\left(F(\phi(x_0))+\kappa\frac{r}{r_0}\right)
\]
for all such $\theta$ and $r$. 
Then it follows from \eqref{eq:1.3} that
\[
\int_{{\mathbb R}^n} e^{-A|x|^2}\phi(x)\,\mathrm{d}x=\infty
\quad \text{for any } A>0,
\]
which, together with \eqref{eq:1.1}, implies that $T(\phi)=0$. 
Hence, $F$-convexity is only trivially preserved  {under} the heat flow, and assertion~(2) holds.

We prove assertion~(3).
Let $F\in C^2((0,\infty))$ be admissible on $[0,\infty)$ such that $\lim_{r\to\infty}F(r)=\infty$. 
Assume condition~\eqref{eq:F}.
Let $\phi$ be $F$-convex on ${\mathbb R}^n$, and assume that $T(\phi)\in(0,\infty]$. 
Set 
\begin{equation}
\label{eq:3.6}
u(x,t):=\left[e^{t\Delta}\phi\right](x),\quad v(x,t):=F(u(x,t)),
\end{equation}
for $x\in{\mathbb R}^n$ and $t\in[0,T(\phi))$.
For any $\epsilon>0$, set 
\begin{equation}
\label{eq:3.7}
\begin{split}
 & \phi_\epsilon(x):=\phi(x)+\epsilon |x|^2,\\
 & u_\epsilon(x,t):=\left[e^{t\Delta}\phi_\epsilon\right](x)=u(x,t)+\epsilon(|x|^2+2nt),\\
 & v_\epsilon(x,t):=F(u_\epsilon(x,t)),
\end{split}
\end{equation}
for $x\in{\mathbb R}^n$ and $t\in[0,T(\phi))$. 
Since $\lim_{r\to\infty}F(r)=\infty$, we have
\begin{equation}
\label{eq:3.8}
v_\epsilon(x,t)\to\infty\quad\text{as}\quad |x|\to\infty\quad\text{for $t\in[0,T(\phi))$}.
\end{equation}
Furthermore, $v_\epsilon$ satisfies
\begin{equation}
\label{eq:3.9}
\partial_t v_\epsilon=\Delta v_\epsilon+g_F(v_\epsilon)|\nabla v_\epsilon|^2\quad
\text{in}\quad{\mathbb R}^n\times(0,T(\phi)),
\end{equation}
 {where $g_F$ is as in \eqref{eq:1.4}.} 
Let $\lambda\in(0,1)$, and define
\[
V_{\lambda,\epsilon}(x,t)
:=\inf\Bigl\{(1-\lambda)v_\epsilon(x_0,t)+\lambda v_\epsilon(x_1,t)
:\, x=(1-\lambda)x_0+\lambda x_1,\; x_0,x_1\in{\mathbb R}^n\Bigr\}
\]
for $(x,t)\in{\mathbb R}^n\times[0,T(\phi))$. 
This, together with \eqref{eq:3.8}, implies that
\begin{equation}
\label{eq:3.10}
V_{\lambda,\epsilon}\in C({\mathbb R}^n\times[0,T(\phi))),\qquad 
V_{\lambda,\epsilon}(x,t)\le v_\epsilon(x,t)
\quad\text{for }(x,t)\in{\mathbb R}^n\times[0,T(\phi)).
\end{equation}
Since $g_F$ is convex, 
by Proposition~\ref{Proposition:3.1}, together with \eqref{eq:3.8} and \eqref{eq:3.9}, 
we see that $V_{\lambda,\epsilon}$ is a viscosity supersolution of \eqref{eq:3.9}.
Define
\[
U_{\lambda,\epsilon}(x,t):=F^{-1}\!\left(V_{\lambda,\epsilon}(x,t)\right),
\quad (x,t)\in{\mathbb R}^n\times[0,T(\phi)). 
\]
Then $U_{\lambda,\epsilon}\in C({\mathbb R}^n\times[0,T(\phi)))$, 
and $U_{\lambda,\epsilon}$ is a viscosity supersolution of the heat equation in
${\mathbb R}^n\times(0,T(\phi))$. 
Furthermore, by \eqref{eq:3.10}, we have
\begin{equation}
\label{eq:3.11}
U_{\lambda,\epsilon}(x,t)\le u_\epsilon(x,t)
\quad\text{for }(x,t)\in{\mathbb R}^n\times[0,T(\phi)).
\end{equation}

Let $T\in(0,T(\phi))$.
By a chain of parabolic Harnack inequalities (see, e.g., \cite{A}*{Theorem~E} and \cite{IM}*{Proposition~2.2}), 
for any $\delta\in(0,T/2)$, 
there exists $A>0$ and $a>0$ such that 
\begin{equation}
\label{eq:3.12}
u(x,t)\le ae^{A|x|^2},\quad (x,t)\in{\mathbb R}^n\times[\delta,T-\delta].
\end{equation}
Define a function $w$ on ${\mathbb R}^n\times[\delta,T-\delta)$ by
\[
w(x,t):=\bigl[e^{(t-\delta)\Delta}U_{\lambda,\epsilon}(\delta)\bigr](x)
- U_{\lambda,\epsilon}(x,t),
\quad (x,t)\in{\mathbb R}^n\times[\delta,T-\delta).
\]
Then $w$ is a viscosity subsolution of the heat equation in ${\mathbb R}^n\times(\delta,T-\delta)$ 
such that $w=0$ on ${\mathbb R}^n\times\{\delta\}$.
Furthermore, by \eqref{eq:3.7}, \eqref{eq:3.11}, and \eqref{eq:3.12}, we have
\begin{align*}
w(x,t)
&\le \bigl[e^{(t-\delta)\Delta}U_{\lambda,\epsilon}(\delta)\bigr](x)\\
&\le \bigl[e^{(t-\delta)\Delta}u_\epsilon(\delta)\bigr](x)
 =u_\epsilon(x,t)=u(x,t)+\epsilon(|x|^2+2nt)
 \le C e^{A|x|^2}
\end{align*}
for $(x,t)\in {\mathbb R}^n\times[\delta,T-\delta)$, where $C>0$ is a constant.
Therefore, by Lemma~\ref{Lemma:2.3}, we conclude that
\[
w\le 0 \quad\text{on } {\mathbb R}^n\times[\delta,T-\delta).
\]
By the arbitrariness of $\delta\in(0,T/2)$, this implies that 
\[
 \bigl[e^{t\Delta}U_{\lambda,\epsilon}(0)\bigr](x)\le U_{\lambda,\epsilon}(x,t),
 \quad (x,t)\in{\mathbb R}^n\times(0,T).
\]
Since $T\in(0,T(\phi))$ is arbitrary and 
$U_{\lambda,\epsilon}\in C({\mathbb R}^n\times[0,T(\phi)))$, 
we obtain
\begin{equation}
\label{eq:3.13}
 \bigl[e^{t\Delta}U_{\lambda,\epsilon}(0)\bigr]\bigl((1-\lambda)x_0+\lambda x_1\bigr)
 \le 
 F^{-1}\!\left((1-\lambda)F(u_\epsilon(x_0,t))+\lambda F(u_\epsilon(x_1,t))\right)
\end{equation}
for $x_0,x_1\in{\mathbb R}^n$ and $t\in(0,T(\phi))$. 
Since $\phi$ is $F$-convex on ${\mathbb R}^n$, it follows that
\begin{equation}
\label{eq:3.14}
U_{\lambda,\epsilon}(x,0)=F^{-1}(V_{\lambda,\epsilon}(x,0))\ge F^{-1}(F(\phi(x)))=\phi(x),\quad x\in{\mathbb R}^n.
\end{equation}
Therefore, by \eqref{eq:3.6}, \eqref{eq:3.13}, and \eqref{eq:3.14}, 
and letting $\epsilon\to 0^+$, we obtain 
\[
u\bigl((1-\lambda)x_0+\lambda x_1,t\bigr)
\le 
F^{-1}\!\left((1-\lambda)F(u(x_0,t))+\lambda F(u(x_1,t))\right)
\]
for $x_0,x_1\in{\mathbb R}^n$ and $t\in(0,T(\phi))$. 
Hence, $F$-convexity is preserved  {under} the heat flow.

Next, we assume that $F$-convexity is preserved  {under} the heat flow. 
Since $F\in C^2((0,\infty))$ is strictly increasing on $(0,\infty)$ and $\lim_{r\to\infty}F(r)=\infty$,
for any $L\in(0,\infty)$, 
there exists $r_*\in(L,\infty)$ such that $F'(r_*)>0$. 
Then we can find $r_1,r_2\in (0,\infty)$ with $r_1<r_*<r_2$ such that 
\begin{equation}
\label{eq:3.15}
F'(r)>0\quad\text{for}\quad r\in(r_1,r_2).
\end{equation}
Let $r_0\in(0,r_1)$. Set
\begin{equation}
\label{eq:3.16}
\phi(x):=
\begin{cases}
f_F(\xi) & \text{if } x=(\xi,x_2,\dots,x_n)\in{\mathbb R}^n \text{ and } \xi\ge F(r_0),\\
f_F(-\xi+2F(r_0)) & \text{if } x=(\xi,x_2,\dots,x_n)\in{\mathbb R}^n \text{ and } \xi< F(r_0).
\end{cases}
\end{equation}
Then $\phi$ is $F$-convex in ${\mathbb R}^n$. Moreover, by condition~\eqref{eq:F}
we have $T(\phi)\in(0,\infty]$, and the function
$u(x,t):=[e^{t\Delta}\phi](x)$ is well defined for
$(x,t)\in{\mathbb R}^n\times[0,T(\phi))$.
For any $\lambda\in(0,1)$, define
\begin{align*}
\Phi_\lambda(x,y,t)
:=&
F\bigl(u((1-\lambda)x+\lambda y,t)\bigr)
-(1-\lambda)F(u(x,t))-\lambda F(u(y,t))
\end{align*}
for $x,y\in{\mathbb R}^n$ and $t\in[0,T(\phi))$.
Since $\phi$ is $F$-convex and $F$-convexity is preserved  {under} the heat flow, 
it follows that 
\begin{align*}
 & \Phi_\lambda(x,y,t)\le 0
 \quad\text{for } x,y\in{\mathbb R}^n,\ t\in[0,T(\phi)),\\
 & \Phi_\lambda(x,y,0)=0
 \quad\text{for } x_1,y_1\leq F(r_0)\,\text{ or }\, x_1,y_1\geq F(r_0).
\end{align*}
Hence,
\[
(\partial_t\Phi_\lambda)(x,y,0)\le 0
\quad\text{for }  x_1,y_1< F(r_0)\,\text{ or }\, x_1,y_1>F(r_0).
\]
Using $\partial_t u=\Delta u$ and the chain rule, by \eqref{eq:3.16}, we obtain
\begin{align}
0
&\ge \partial_t\Phi_\lambda(x,y,0) \nonumber\\
&=F'(u((1-\lambda)x+\lambda y,0))\,\Delta u((1-\lambda)x+\lambda y,0)\nonumber\\
&\quad-(1-\lambda)F'(u(x,0))\,\Delta u(x,0)
-\lambda F'(u(y,0))\,\Delta u(y,0)\label{eq:3.17}
\end{align}
for $x=(x_1,0,\dots,0)$, $y=(y_1,0,\dots,0)\in{\mathbb R}^n$ with $x_1$, $y_1\in(F(r_1),F(r_2))$. 
This implies that
\begin{align}
0
&\ge F'(f_F((1-\lambda)x_1+\lambda y_1))\,f_F''((1-\lambda)x_1+\lambda y_1)\nonumber\\
&\quad-(1-\lambda)F'(f_F(x_1))\,f_F''(x_1)
-\lambda F'(f_F(y_1))\,f_F''(y_1)\label{eq:3.18}
\end{align}
for $x_1$, $y_1\in(F(r_1),F(r_2))$. 
Since
\[
F'(f_F(z))=\frac{1}{f_F'(z)} \quad\text{for } z\in(F(r_1),F(r_2)),
\]
it follows from \eqref{eq:3.15} and \eqref{eq:3.18} that
\begin{equation}\label{eq:3.19}
g_F((1-\lambda)x_1+\lambda y_1)
\le (1-\lambda)g_F(x_1)+\lambda g_F(y_1)
\end{equation}
for all $x_1,y_1\in(F(r_1),F(r_2))$.
Hence, $g_F$ is convex on $(F(r_1),F(r_2))$.

Assume that $F'(r_1)=0$. It follows from \eqref{eq:3.19} that 
\[
\liminf_{z\to F(r_1)^+} (\log f_F')'(z)
   = \liminf_{z\to F(r_1)^+} g_F(z) > -\infty,
\]
which implies that for any $z_0\in(F(r_1),F(r_2))$,
\[
\log f_F'(z_0)
 - \lim_{z\to F(r_1)^+}\log f_F'(z)
 = \lim_{z\to F(r_1)^+}\int_z^{z_0} (\log f_F'(w))'\,\mathrm{d}w
 > -\infty.
\]
Hence, $\log f_F'(z)$ is bounded  {from above as $z\to F(r_1)^+$.}
On the other hand, since
\[
F'(f_F(z))\, f_F'(z)=1
\quad \text{for } z\in(F(r_1),F(r_2)),
\]
and $F'(r_1)=0$, we obtain
\[
\lim_{z\to F(r_1)^+} f_F'(z)
 = \lim_{z\to F(r_1)^+}\frac{1}{F'(f_F(z))} = \infty.
\]
This contradicts the upper boundedness of $\log f_F'(z)$  {as $z\to F(r_1)^+$}. 
Therefore, $F'(r_1)>0$, and we see that $F'>0$ on $(0,L)$ and 
$g_F$ is convex on $(F(0^+),L)$. 
Since $L>0$ is arbitrary, we conclude that 
$F'(r)>0$ for all $r>0$, 
and $g_F$ is convex on $(F(0^+),\infty)$. 
Hence, \eqref{eq:1.4} holds.
This proves assertion~(3), and thus completes the proof of 
Theorem~\ref{Theorem:1.1}.
$\Box$
\vspace{5pt}
\newline
{\bf Proof of Theorem~\ref{Theorem:1.2}.}
Assume that $F$-convexity is preserved  {under}  the heat flow. 
Then, by Theorem~\ref{Theorem:1.1}, we have 
\begin{equation}
\label{eq:3.20}
\text{$f_F'>0$ on $J_F$},
\qquad 
\text{$g_F:=(\log f_F')'$ is convex on $J_F$}.
\end{equation}
Here $J_F=(F(0^+),\infty)$. 
Fix $z'\in J_F$. 
By the definition of $g_F$, we have
\begin{equation}
\label{eq:3.21}
f_F(z)
= f_F(z')
+ f'_F(z')
\int_{z'}^z
\exp\!\left(\int_{z'}^s g_F(\tau)\,\mathrm{d}\tau\right)\mathrm{d}s,
\quad z\in J_F. 
\end{equation}

We claim that 
\begin{equation}
\label{eq:3.22}
\text{$g_F$ is non-increasing on $J_F$}. 
\end{equation}
Assume to the contrary that $g_F(z_1)>g_F(z_0)$ for some $z_0$, $z_1\in J_F$ with $z_0<z_1$. 
By the convexity of $g_F$, there exist $z_2\in[z_1,\infty)$ and $\ell>0$ such that 
\[
g_F(z)\ge \ell(z-z_2),\quad z\in[z_2,\infty).
\]
Applying \eqref{eq:3.21} with $z'=z_2$, we have
\begin{equation*}
\begin{split}
 & f_F(z)-f_F(z_2)\\
& =f'_F(z_2)\int_{z_2}^z
\exp\!\left(\int_{z_2}^s g_F(\tau)\,\mathrm{d}\tau\right)\mathrm{d}s
\ge f'_F(z_2)\int_{z_2}^z
\exp\!\left(\ell\int_{z_2}^s(\tau-z_2)\,\mathrm{d}\tau\right)\mathrm{d}s\\
&= f'_F(z_2)\int_{z_2}^z
\exp\!\left(\frac{\ell}{2}(s-z_2)^2\right)\mathrm{d}s\ge C\exp\!\left(\frac{\ell}{4}(z-z_2)^2\right)
\end{split}
\end{equation*}
for all $z\ge z_2+1$, where $C>0$ is a constant.
Hence,
\[
f_F(z)\ge C\exp\!\left(\frac{\ell}{8}z^2\right)
\quad\text{for all sufficiently large $z$.}
\]
This contradicts condition~\eqref{eq:F'}. 
Therefore, \eqref{eq:3.22} holds. 

Next, we show that 
\begin{equation}
\label{eq:3.24}
g_F(z)=\frac{f_F''(z)}{f_F'(z)}\ge 0,
\quad z\in J_F.
\end{equation}
Assume that $g_F(z_3)<0$ for some $z_3\in J_F$. 
Then it follows from \eqref{eq:3.22} that
$$
g_F(z)\le g_F(z_3)<0,\quad z\in[z_3,\infty).
$$
Applying \eqref{eq:3.21} with $z'=z_3$, we have
$$
f_F(z) 
\le f_F(z_3)
+ C\int_{z_3}^z
\exp\!\left(g_F(z_3)(s-z_3)\right)\,\mathrm{d}s
$$
for all $z\in[z_3,\infty)$, where $C>0$ is a constant. 
This contradicts 
$\lim_{z\to\infty} f_F(z)=\infty$. 
Therefore, \eqref{eq:3.24} holds.
This implies that $f_F$ is convex on $J_F$. 
Then, by Lemma~\ref{Lemma:2.1}, we conclude that convexity is weaker than $F$-convexity.

Next, we prove that $f_F$ is log-concave on $J_F$, that is, 
\begin{equation}
\label{eq:3.25}
f_F(z)\,f_F''(z)-(f_F'(z))^2\le 0,\quad z\in J_F.
\end{equation}
If $f_F''\le 0$ on $J_F$, then \eqref{eq:3.25} obviously holds. 
Hence, it suffices to consider the case
\begin{equation}
\label{eq:3.26}
f_F''(z_*)>0\quad\mbox{for some $z_*\in J_F$}.
\end{equation}
Then $g_F(z_*)>0$, which, together with \eqref{eq:3.22}, implies that
\begin{equation}
\label{eq:3.27}
g_F(z)>0,\quad z\in(F(0^+),z_*].
\end{equation}
On the other hand, since $\lim_{z\to F(0^+)^+}f_F(z)=0$, 
by \eqref{eq:3.21} with $z'=z_*$, we have
$$
0=f_F(z_*)+f'_F(z_*)\int_{z_*}^{F(0^+)}
\exp\!\left(\int_{z_*}^s g_F(\tau)\,\mathrm{d}\tau\right)\,\mathrm{d}s.
$$
Using \eqref{eq:3.21} with $z'=z_*$ again, we obtain
$$
f_F(z)
= f_F'(z_*)\int_{F(0^+)}^z
\exp\!\left(\int_{z_*}^s g_F(\tau)\,\mathrm{d}\tau\right)\,\mathrm{d}s,
\quad z\in(F(0^+), z_*],
$$
and hence
\begin{equation}
\label{eq:3.28}
f_F'(z)
= f_F'(z_*)\exp\!\left(\int_{z_*}^z g_F(\tau)\,\mathrm{d}\tau\right),
\quad z\in(F(0^+), z_*],
\end{equation}

Define 
$$
h_F(z):=f_F(z)-\frac{f_F'(z_*)}{g_F(z)}
\exp\left(\int_{z_*}^z g_F(\tau)\,\mathrm{d}\tau\right),
\quad z\in (F(0^+),z_*].
$$
By \eqref{eq:3.20} and \eqref{eq:3.27}, we have
\begin{equation}
\label{eq:3.29}
h_F(z)\leq f_F(z),\quad z\in(F(0^+), z_*].
\end{equation}
Differentiating $h_F$ and using \eqref{eq:3.22} and \eqref{eq:3.28}, 
we obtain
\begin{equation}
\label{eq:3.30}
\begin{split}
h_F'(z) & =f_F'(z)+\frac{f_F'(z_*)g_F'(z)}{g_F(z)^2}\exp\left(\int_{z_*}^z g_F(\tau)\,\mathrm{d}\tau\right)
-f_F'(z_*)\exp\left(\int_{z_*}^z g_F(\tau)\,\mathrm{d}\tau\right)\\
 & \le f_F'(z)-f_F'(z_*)\exp\left(\int_{z_*}^z g_F(\tau)\,\mathrm{d}\tau\right)=0
\end{split}
\end{equation}
for $z\in (F(0^+),z_*]$.
On the other hand,  {it follows from \eqref{eq:3.29} that}
\begin{equation}
\label{eq:3.31}
\limsup_{z\to F(0^+)^+} h_F(z)
\le \limsup_{z\to F(0^+)^+} f_F(z)=0.
\end{equation}
Therefore, \eqref{eq:3.30} and \eqref{eq:3.31} imply that $h_F\le 0$ on $(F(0^+),z_*]$. 
In particular,
$$
0\ge h_F(z_*)
= f_F(z_*)-\frac{f_F'(z_*)}{g_F(z_*)}
= f_F(z_*)-\frac{(f_F'(z_*))^2}{f_F''(z_*)}.
$$
Hence, by \eqref{eq:3.26}, we obtain \eqref{eq:3.25}. 
This shows that $f_F$ is log-concave on $J_F$. 
Then, by Lemma~\ref{Lemma:2.1}, we conclude that $F$-convexity is weaker than log-convexity.
This completes the proof of Theorem~\ref{Theorem:1.2}.
$\Box$
%%%%%%%%%%%%%%%%%%%%%%
%%%%%%%%%%%%%%%%%%%%%%
\section{Preservation of $F$-convexity without any sign restriction}
%%%%%%%%%%%%%%%%%%%%%%
%%%%%%%%%%%%%%%%%%%%%%
In this section, we apply the arguments developed above to study the preservation of convexity properties under the heat flow without any sign restriction on the solutions.
As in the case of nonnegative solutions, if $\phi$ is continuous and bounded from below on ${\mathbb R}^n$, 
then problem~\eqref{eq:H} admits a classical solution in ${\mathbb R}^n\times(0,T)$ for some $T\in(0,\infty]$ if and only if $\phi$ satisfies~\eqref{eq:1.1}.
Furthermore, under condition~\eqref{eq:1.1}, $e^{t\Delta}\phi$ is the unique classical solution to problem~\eqref{eq:H} 
that is bounded from below by $\inf_{{\mathbb R}^n} \phi$. 
Taking this into account, we define admissible functions on ${\mathbb R}$ and discuss the preservation of $F$-convexity  {under} the heat flow.
\begin{definition}
\label{Definition:4.1}
A function $F:{\mathbb R}\rightarrow{\mathbb R}$ is said to be admissible on ${\mathbb R}$ if 
$F$ is strictly increasing and continuous on ${\mathbb R}$. 
Denote by $f_F$ the inverse function of $F$ on $J_F:=F({\mathbb R})$. 
\end{definition}
 {We extend Definition~\ref{Definition:1.2} to functions bounded from below in a natural way.}
In particular, in this section we say that $F$-convexity is preserved  {under}  the heat flow if, for any $F$-convex function~$\phi$ bounded from below on ${\mathbb R}^n$ 
with $T(\phi)\in(0,\infty]$, the function $e^{t\Delta}\phi$ remains $F$-convex for all $t\in(0,T(\phi))$. 
Then we have the following results.
\begin{theorem}
\label{Theorem:4.1}
Let $F$ be admissible on ${\mathbb R}$. 
Then the assertions of Theorem~{\rm\ref{Theorem:1.1}} remain valid, with~$\mathbb{R}$ in place of $(0,\infty)$. 
\end{theorem}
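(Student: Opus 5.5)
We will follow the proof of Theorem~\ref{Theorem:1.1} line by line. The only changes are that $(0,\infty)$ becomes ${\mathbb R}$, and that nonnegativity is replaced by the assumption that $\phi$ is bounded from below. In this setting $F$ is finite and continuous on all of ${\mathbb R}$, and $J_F=(\inf F,\sup F)$.

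For assertion~(1), if $\sup F<\infty$, then $F(\phi)$ is a convex function on ${\mathbb R}^n$ that is bounded above, hence constant, exactly as in Remark~\ref{Remark:1.1}. For assertion~(2), let $\phi$ be nonconstant and $F$-convex. Convexity of $F(\phi)$ gives linear growth of $F(\phi)$ along a cone of directions, so $\phi(x_0+\theta r)\ge f_F(F(\phi(x_0))+\kappa r/r_0)$ there. Since $\phi$ is bounded from below, the remaining part of the integral $\int e^{-A|x|^2}\phi\,\dee x$ is bounded below. Together with \eqref{eq:1.3} this gives divergence for every $A>0$, hence $T(\phi)=0$ by the characterization \eqref{eq:1.1}, which remains valid for data bounded from below.

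For the sufficiency part of assertion~(3), we set $u=e^{t\Delta}\phi$ and $u_\epsilon=u+\epsilon(|x|^2+2nt)$. Then $u_\epsilon\to\infty$ as $|x|\to\infty$, because $u\ge\inf\phi$. We also set $v_\epsilon=F(u_\epsilon)$, which satisfies \eqref{eq:3.9}, and apply Proposition~\ref{Proposition:3.1} using the convexity of $g_F$. This yields the viscosity supersolution $U_{\lambda,\epsilon}=f_F(V_{\lambda,\epsilon})$ with $U_{\lambda,\epsilon}\le u_\epsilon$. The growth bound \eqref{eq:3.12} for $u$ comes from Harnack applied to $u-\inf\phi\ge0$. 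We then use Lemma~\ref{Lemma:2.3} on the difference $e^{(t-\delta)\Delta}U_{\lambda,\epsilon}(\delta)-U_{\lambda,\epsilon}$. This step needs the upper bound on $e^{(t-\delta)\Delta}U_{\lambda,\epsilon}(\delta)$ already used in the nonnegative case. It also needs $U_{\lambda,\epsilon}$ to be bounded from below, so that the heat semigroup of it is well defined and the subtraction is controlled. This lower bound holds because $V_{\lambda,\epsilon}\ge F(\inf\phi)$, hence $U_{\lambda,\epsilon}\ge\inf\phi$. Since $F$-convexity of $\phi$ gives $U_{\lambda,\epsilon}(\cdot,0)\ge\phi$, letting $\epsilon\to0$ concludes as before.

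For necessity, we use the same reflected profile \eqref{eq:3.16}, now with $r_0$ any real number less than $r_1$. Then $\phi\ge f_F(F(r_0))=r_0$ is bounded from below, and \eqref{eq:F} (with $F(1)$ replaced by any fixed point of $J_F$) gives $T(\phi)>0$. The time-derivative computation \eqref{eq:3.17}--\eqref{eq:3.19} is unchanged and yields convexity of $g_F$ where $F'>0$. The argument excluding $F'(r_1)=0$ is purely local and goes through verbatim. Letting $L\to\infty$ and $r_0\to-\infty$ then covers all of ${\mathbb R}$. The main obstacle is bookkeeping in the comparison step, namely verifying that every quantity stays bounded below and that Lemma~\ref{Lemma:2.3}'s growth hypothesis \eqref{eq:2.1} holds, since $F$ may tend to $-\infty$ or to a finite value at $-\infty$. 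I would handle this uniformly via the lower bound $\inf\phi$ noted above, which is exactly what replaces the role played by $0$ in the nonnegative case.
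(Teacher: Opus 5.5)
Your proposal is correct and follows the paper's approach. The paper omits this proof, saying only that the proof of Theorem~\ref{Theorem:1.1} carries over once the initial data are restricted to functions bounded from below; your adaptation (using $\inf\phi$ in place of $0$ for the Harnack growth bound and the lower bound $U_{\lambda,\epsilon}\ge\inf\phi$, and taking any real $r_0<r_1$ in the reflected profile) makes exactly that transfer explicit.
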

{\bf Proof.}
The proof is the same as that of Theorem~\ref{Theorem:1.1} and is therefore omitted.
We only note that, in the present setting, the discussion of the preservation of $F$-convexity is restricted to initial data that are bounded from below.
$\Box$
\begin{theorem}
\label{Theorem:4.2}
Let $F\in C^2({\mathbb R})$ be admissible on ${\mathbb R}$ and assume that
\begin{equation}
\label{eq:4.1}
\int_{F({\mathbb R})} e^{-A z^2}\,|f_F(z)|\,\mathrm{d}{z}<\infty
\quad \text{for any } A>0.
\end{equation}
Then $F$-convexity is preserved  {under}  the heat flow if and only if 
there exists a pair $(A,B)\in(0,\infty)\times{\mathbb R}$ such that
$$
F(r)=Ar+B\quad \text{for all } r\in{\mathbb R}.
$$ 
That is, $F$-convexity coincides with the usual convexity.
\end{theorem}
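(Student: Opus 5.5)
The "if" direction is immediate. If $F(r)=Ar+B$ with $A>0$, then ${\mathcal C}[F]$ is the class of convex functions bounded from below. Convexity is preserved under the heat flow by the representation-formula argument of (A1). Lower bounds are also preserved, since $e^{t\Delta}\phi\ge\inf\phi$.

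For the "only if" direction, suppose $F$-convexity is preserved. Then $\lim_{r\to\infty}F(r)=\infty$ and $F$-convexity is nontrivial. Condition~\eqref{eq:4.1} gives \eqref{eq:F}, so Theorem~\ref{Theorem:4.1}, in the form of Theorem~\ref{Theorem:1.1}(3) with ${\mathbb R}$ in place of $(0,\infty)$, yields
\[
F'>0 \text{ on } {\mathbb R},\qquad g_F=(\log f_F')' \text{ convex on } J_F=F({\mathbb R}).
\]
I would first show that $J_F={\mathbb R}$. We already have $\sup J_F=\infty$. If $\inf J_F=c>-\infty$, then the convexity of $F(\phi)$ for nonconstant $\phi$ still works, but I expect that the argument below also forces $\inf J_F=-\infty$. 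This is seen directly: a convex $g_F$ together with $f_F\to-\infty$ as $z\to c^+$ would require $f_F'$ to blow up at $c$, while $g_F$ bounded below on $(c,\cdot)$ makes $\log f_F'$ bounded above there, exactly as in the $F'(r_1)=0$ step of the proof of Theorem~\ref{Theorem:1.1}. In any case, I would treat $J_F=(c,\infty)$ with $c\in[-\infty,\infty)$ and aim to show $g_F\equiv 0$.

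The key step is the argument of Theorem~\ref{Theorem:1.2}, run at both ends. Growth at $+\infty$ together with \eqref{eq:4.1} at $+\infty$ gives, exactly as in \eqref{eq:3.22}, that $g_F$ is non-increasing. Then, as in \eqref{eq:3.24}, $g_F\ge 0$, since otherwise $f_F$ would stay bounded above as $z\to\infty$. Now consider the lower end. If $c=-\infty$, then $f_F(z)\to-\infty$ as $z\to-\infty$, because $F$ is defined on all of ${\mathbb R}$. Condition~\eqref{eq:4.1} with the absolute value controls $|f_F|$ at $-\infty$ as well. By symmetry of the argument (apply it to $\tilde F(r)=-F(-r)$, whose inverse is $z\mapsto -f_F(-z)$), I would try to show that $g_F$ is non-decreasing. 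The expected obstacle is that $\tilde F$-convexity is concavity-type, so Theorem~\ref{Theorem:4.1} does not apply to $\tilde F$. I would therefore argue directly. Suppose $g_F(z_0)>0$ for some $z_0$. Since $g_F$ is non-increasing, $g_F\ge g_F(z_0)=:\ell>0$ on $(-\infty,z_0]$. Then
\[
f_F'(z)\le f_F'(z_0)e^{-\ell(z_0-z)}\quad\text{for } z\le z_0,
\]
so $f_F'$ is integrable at $-\infty$ and $f_F$ stays bounded below as $z\to-\infty$. This contradicts $f_F(z)\to-\infty$, which follows from $f_F(J_F)={\mathbb R}$. Hence $g_F\equiv 0$.

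If instead $c>-\infty$, then $f_F\to-\infty$ as $z\to c^+$ forces $\int_c f_F'=\infty$. But $g_F\ge0$ and non-increasing means $f_F'$ is non-decreasing, so $f_F'$ is bounded near $c$ and the integral is finite, a contradiction. So $c=-\infty$, and we are in the previous case.

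Therefore $g_F\equiv0$, so $f_F'$ is a positive constant and $f_F$ is affine. Inverting gives $F(r)=Ar+B$ with $A>0$. By Lemma~\ref{Lemma:2.2}, adapted to ${\mathbb R}$, $F$-convexity coincides with usual convexity. The main point to check carefully is the use of \eqref{eq:4.1} at $+\infty$, which gives the monotonicity of $g_F$ exactly as in \eqref{eq:3.22}. Notably, \eqref{eq:4.1} at $-\infty$ turns out not to be needed.
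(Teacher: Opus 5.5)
Your proposal is correct and follows essentially the paper's route: Theorem~\ref{Theorem:4.1} gives \eqref{eq:1.4}; the arguments of \eqref{eq:3.22} and \eqref{eq:3.24} give that $g_F$ is non-increasing and nonnegative; $g_F\ge 0$ excludes $\inf J_F>-\infty$; and positivity of $g_F$ anywhere forces $f_F$ to be bounded below at $-\infty$. Your one-step version of this last point uses only monotonicity ($g_F\ge g_F(z_0)>0$ on $(-\infty,z_0]$) and merges the paper's two steps, which first show $g_F$ is constant via convexity and then that the constant is zero.

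One caveat, which the paper shares tacitly: your inference "preserved, hence $\lim_{r\to\infty}F(r)=\infty$" is not valid. For example, $F(r)=1-e^{-r}$ satisfies \eqref{eq:4.1} and gives a trivial, hence vacuously preserved, $F$-convexity, yet $F$ is not affine. So $\lim_{r\to\infty}F(r)=\infty$ must be taken as an implicit hypothesis, as in Theorem~\ref{Theorem:1.2}, rather than derived.
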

{\bf Proof.}
Let $F\in C^2({\mathbb R})$ be admissible on ${\mathbb R}$ and assume \eqref{eq:4.1}. 
Suppose  {that} $F$-convexity is preserved under  the heat flow. 
By Theorem~\ref{Theorem:4.1}, we see that \eqref{eq:1.4} holds. 
Furthermore, applying the same arguments as in \eqref{eq:3.22} and \eqref{eq:3.24}, 
we find that $g_F$ is non-increasing, nonnegative, and convex on $J_F$. 

Let $z_*:=\inf_{r\in{\mathbb R}}F(r)$. 
Since the domain of $F$ is ${\mathbb R}$, it follows that 
\begin{equation}
\label{eq:4.2}
\lim_{z\to z_*^+}f_F(z)=-\infty.
\end{equation}
Since $g_F\ge 0$ on $J_F$, 
if $z_*>-\infty$, then, by \eqref{eq:3.21}, we see that $f_F$ is bounded from below, which contradicts \eqref{eq:4.2}. 
Hence, $z_*=-\infty$, that is, $J_F:=F({\mathbb R})={\mathbb R}$ and 
\begin{equation}
\label{eq:4.3}
\lim_{z\to -\infty}f_F(z)=-\infty.
\end{equation}

We prove by contradiction that $g_F$ is constant on ${\mathbb R}$.
Assume that $g_F$ is not constant on ${\mathbb R}$.
Since $g_F$ is non-increasing, there exist $z_1<z_0$ such that
\[
g_F(z_1)>g_F(z_0).
\]
By the convexity and the nonnegativity of $g_F$, we have
$$
g_F(z)\ge g_F(z_1)+\ell(z-z_1)\geq \ell(z-z_1)\quad\text{ for } z\in(-\infty, z_1],
$$
with
$$
\ell=\frac{g_F(z_0)-g_F(z_1)}{z_0-z_1}<0\,.
$$
This, together with \eqref{eq:3.20} and \eqref{eq:3.21}, implies that 
\begin{equation*}
\begin{split}
 f_F(z)-f_F(z_1)
 & = f'_F(z_1)\int_{z_1}^z
 \exp\left(\int_{z_1}^s g_F(\tau)\,\mathrm{d}\tau\right)\,\mathrm{d}s \\
 & \ge f'_F(z_1)\int_{z_1}^z
 \exp\left(\ell\int_{z_1}^s(\tau-z_1)\,\mathrm{d}\tau\right)\,\mathrm{d}s \\
 & = f'_F(z_1)\int_{z_1}^z 
 \exp\left(\frac{\ell}{2}(s-z_1)^2\right)\,\mathrm{d}s
\end{split}
\end{equation*}
for $z\in(-\infty,z_1]$. 
Since $\ell<0$, the integral on the right-hand side is bounded from below uniformly in $z\le z_1$. 
Hence $f_F$ is bounded from below on $(-\infty,z_1]$, which contradicts \eqref{eq:4.3}.

Next, we prove that $g_F=0$ on ${\mathbb R}$. 
Since $g_F$ is constant on ${\mathbb R}$ and nonnegative, there exists $c\ge0$ such that
\[
g_F(z)=c \qquad \text{for } z\in{\mathbb R}.
\]
Assume that $c>0$. Then, by \eqref{eq:3.21}, for any fixed $z_2\in{\mathbb R}$ we have
\[
f_F(z)
= f_F(z_2)+f'_F(z_2)\int_{z_2}^z e^{c(s-z_2)}\,\mathrm{d}s
= f_F(z_2)+\frac{f'_F(z_2)}{c}\bigl(e^{c(z-z_2)}-1\bigr),
\quad z\in{\mathbb R}.
\]
This shows that $f_F$ is bounded from below, which contradicts \eqref{eq:4.3}.
Hence, $c=0$, and therefore $g_F\equiv0$ on ${\mathbb R}$.
Then, 
for any fixed $z'\in{\mathbb R}$, 
it follows from \eqref{eq:3.21} that
$$
f_F(z)=f_F(z')+f'_F(z')(z-z'),
\quad z\in(-\infty,\infty),
$$
that is, $f_F$ is affine on ${\mathbb R}$, and hence $F$ is affine as well.
Therefore, $F$-convexity coincides with the usual convexity.
This completes the proof of Theorem~\ref{Theorem:4.2}.
$\Box$
%%%%%%%%%%%%%%%%%%%%%%
%%%%%%%%%%%%%%%%%%%%%%
\section{Preservation of $F$-convexity under the Dirichlet heat flow}
%%%%%%%%%%%%%%%%%%%%%%
%%%%%%%%%%%%%%%%%%%%%%
Consider the Cauchy--Dirichlet problem for the heat equation
\begin{equation}
\tag{DH}
\label{eq:DH}
\left\{
\begin{array}{ll}
\partial_t u=\Delta u\quad & \text{in}\quad\Omega\times(0,T),\vspace{3pt}\\
u=\ell \quad & \text{on}\quad\partial\Omega\times(0,T)\quad\text{if}\quad\partial\Omega\not=\emptyset,\vspace{3pt}\\
u=\phi\quad & \text{on}\quad\Omega\times\{0\}, 
\end{array}
\right.
\end{equation}
where $\Omega$ is a convex domain in ${\mathbb R}^n$ with $\partial\Omega\not=\emptyset$, 
$\ell\in{\mathbb R}$, and $\phi$ is a bounded continuous function on $\Omega$. 
Then problem~\eqref{eq:DH} admits a unique classical global-in-time solution, 
which we denote by $e^{t\Delta_{\mathrm{DH}}}\phi$ in this section. 
 {Using} the arguments in \cite{IST05}, 
we study the convexity properties of solutions to problem~\eqref{eq:DH}.
\begin{definition}
\label{Definition:5.1}
Let $-\infty\le a<\ell<\infty$, and set $I:=(a,\ell]$ or $I:=[a,\ell]$. 
A function $F:I\rightarrow{\mathbb R}\cup\{\pm\infty\}$ is said to be admissible on $I$ if 
$F$ is strictly increasing in $I$, continuous on $(a,\ell)$, $F(a)=\lim_{r\to a^+}F(r)$, and $F(\ell)=\infty$.
\end{definition}

\begin{definition}
\label{Definition:5.2}
Let $-\infty\le a<\ell<\infty$, and set $I:=(a,\ell]$ or $I:=[a,\ell]$. 
Let $F$ be admissible on~$I$,  {and let $\Omega$ be a convex domain in ${\mathbb R}^n$.} 
Define
$$
\mathcal{A}_\Omega(I)
:=\{f:\Omega\to{\mathbb R}\mid f(\Omega)\subset I\}.
$$
\begin{itemize}
  \item[{\rm (1)}] 
  Given $f\in\mathcal{A}_\Omega(I)$,
  we say that $f$ is $F$-convex in $\Omega$ if 
  $$
  F(f((1-\lambda)x+\lambda y))
  \le(1-\lambda)F(f(x))+\lambda F(f(y))
  $$
  for all $x$, $y\in {\Omega}$ with 
  $(F(f(x)),F(f(y)))\not=(\pm\infty,\mp\infty)$ and all $\lambda\in(0,1)$. 
  We denote by $\mathcal{C}_\Omega[F]$ the set of $F$-convex functions on $ {\Omega}$.
  For any admissible functions $F_1$ and $F_2$, if ${\mathcal C}_\Omega[F_1]\subset {\mathcal C}_\Omega[F_2]$, 
  we say that $F_1$-convexity is stronger than $F_2$-convexity, 
  or equivalently, that $F_2$-convexity  is weaker than $F_1$-convexity. 
  \item[{\rm (2)}] 
  We say that $F$-convexity  is preserved  {under}  the Dirichlet heat flow~\eqref{eq:DH} if, 
  for any bounded $F$-convex function~$\phi$ on $\Omega$, 
  the solution $e^{t\Delta_{\mathrm{DH}}}\phi$ is $F$-convex for all $t>0$. 
\end{itemize}
\end{definition}
We modify the arguments in \cite{IST05} to introduce the notion of \emph{hot-convexity}. 
\begin{definition}
\label{Definition:5.3}
Let
$$
h(z):=\left(e^{{\Delta}_{{\mathbb R}}}{\bf 1}_{[0,\infty)}\right)(z)
=(4\pi)^{-\frac{1}{2}}\int_0^\infty e^{-\frac{|z-w|^2}{4}}\,\mathrm{d}w,
\quad z\in{\mathbb R}.
$$
Then the function $h$ is smooth on ${\mathbb R}$, 
$\lim_{z\to-\infty}h(z)=0$, $\lim_{z\to\infty}h(z)=1$, and $h'>0$ on ${\mathbb R}$.
We denote by $H$ the inverse function of $h$. 
For any $a\in(0,\infty]$, we define an admissible function $H_a$ on $[0,a]$ by  
\begin{equation*}
H_a(r):=
\begin{cases}
H(r/a), & r\in(0,a),\ a<\infty,\\[3pt]
\log r, & r\in(0,a),\ a=\infty,\\[3pt]
\infty, & r=a,\ a<\infty.
\end{cases}
\end{equation*}
We call $H_a$-convexity \emph{hot-convexity}.
\end{definition}
We then obtain the following results.
\begin{theorem}
\label{Theorem:5.1}
Let $\Omega$ be a convex domain in ${\mathbb R}^n$ with $\partial\Omega\not=\emptyset$. 
Let $F$ be admissible on $I$, where $I:=(a,\ell]$ or $I:=[a,\ell]$ with $-\infty\le a<\ell<\infty$.
\begin{itemize}
\item[{\rm (1)}] 
  $H_{\ell-a}$-convexity is the strongest $F$-convexity preserved  {under}  the Dirichlet heat flow~\eqref{eq:DH}.
\item[{\rm (2)}] 
  Define an admissible function $\Phi_{\ell-a}$ on $I$ by 
  $$
  \Phi_{\ell-a}(r):=
  \begin{cases}
  -\log(\ell-r), & r\in[a,\ell),\\[5pt]
  \infty, & r=\ell.
  \end{cases}
  $$
  Then $\Phi_{\ell-a}$-convexity is the weakest $F$-convexity preserved  {under}  the Dirichlet heat flow~\eqref{eq:DH} for $n\geq 2$; 
  when $n=1$, the same is true under the $C^2$-regularity assumption on $F$ 
  (moreover, quasi-convexity is preserved as well).
\item[{\rm (3)}] 
  When $n\geq 2$, if the initial datum is not $\Phi_{\ell-a}$-convex, 
  even quasi-convexity can be immediately destroyed  {under} the Dirichlet heat flow~\eqref{eq:DH}, 
  thus destroying all nontrivial convexity properties.
\end{itemize}
\end{theorem}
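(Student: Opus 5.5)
The plan is to reduce Theorem~\ref{Theorem:5.1} to the concavity results of \cite{IST05} for the Dirichlet heat flow, using the reflection $r\mapsto \ell-r$.

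\emph{Step 1: the reflection.} Given $\phi$ with values in $I$, set $\psi:=\ell-\phi$. Then $\psi$ takes values in $[0,\ell-a)$ or $[0,\ell-a]$, and
$$
e^{t\Delta_{\mathrm{DH}}}\phi=\ell-e^{t\Delta_{\mathrm{D}}}\psi,
$$
where $e^{t\Delta_{\mathrm D}}$ denotes the Dirichlet heat flow with zero boundary data. Define $G(s):=-F(\ell-s)$ for $s\in[0,\ell-a]$. Then $G$ is strictly increasing, $G(0)=-\infty$, and
$$
\phi\in\mathcal C_\Omega[F]\iff \psi \text{ is } G\text{-concave in } \Omega .
$$
The excluded pairs $(\pm\infty,\mp\infty)$ in Definition~\ref{Definition:5.2} correspond to the convention used for $G$-concavity in \cite{IST05}, where $\psi=0$ gives $G=-\infty$. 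Hence $F$-convexity is preserved under \eqref{eq:DH} if and only if $G$-concavity is preserved under the zero Dirichlet heat flow. The map $F\mapsto G$ reverses inclusions in the same way on both sides: $\mathcal C_\Omega[F_1]\subset\mathcal C_\Omega[F_2]$ if and only if the corresponding concavity classes satisfy the same inclusion. So ``strongest/weakest'' is carried over verbatim.

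\emph{Step 2: transport the admissible functions.} For $\Phi_{\ell-a}$ we get $G(s)=\log s$, which is log-concavity. For $H_{\ell-a}$ we get $G(s)=-H((\ell-s)/(\ell-a))$. Since $1-h(z)=h(-z)$ by symmetry of the Gaussian, we have $H(1-\sigma)=-H(\sigma)$, so $G(s)=H(s/(\ell-a))$. This is exactly the hot-concavity of \cite{IST05} on $[0,\ell-a]$. When $a=-\infty$, we take $\log$ by convention, matching their $a=\infty$ case. I will check these identities carefully, including the endpoint values $\pm\infty$.

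\emph{Step 3: invoke \cite{IST05}.} Their Theorem~1.5 and Theorem~1.6, and the associated propositions, state the following.
\begin{itemize}
\item[(i)] Hot-concavity is the strongest $G$-concavity preserved.
\item[(ii)] Log-concavity is the weakest one for $n\ge2$, and also for $n=1$ among $C^2$ functions, with quasi-concavity also preserved when $n=1$.
\item[(iii)] For $n\ge2$, quasi-concavity can be instantly destroyed for non-log-concave data.
\end{itemize}
Translating back through Steps 1--2 yields (1), (2) and (3). Quasi-concavity of $\psi$ corresponds to quasi-convexity of $\phi$, since superlevel sets of $\psi$ are sublevel sets of $\phi$.

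\emph{Main obstacle.} The setting of \cite{IST05} (bounded data, the admissible class on $[0,a]$, convex $\Omega$ possibly unbounded) must be checked to match ours exactly after reflection. In particular, boundedness of $\phi$ means $\psi$ is bounded, and the case $I=(a,\ell]$ versus $[a,\ell]$ must correspond to the open or closed endpoint of the range of $\psi$. If some statement in \cite{IST05} is formulated only for $a<\infty$, I would handle $a=-\infty$ by a limiting argument. That argument approximates data by truncations $\max\{\phi,-k\}$, whose reflections are $\min\{\psi,k+\ell\}$ and remain $G$-concave, exactly the truncation remarked on in the introduction. It then passes to the limit via monotone convergence of the Dirichlet heat flow.
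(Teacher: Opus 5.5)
Your proposal is the paper's proof: you reflect via $\tilde u=\ell-u$ to the zero-boundary Dirichlet problem with datum $\ell-\phi$ and invoke \cite{IST05}*{Theorems~1.5 and~1.6}, exactly as the paper does for $I=(a,\ell]$. The paper's only extra step is the case $I=[a,\ell]$, which it handles by replacing $\phi$ with $\max\{\phi,a+\epsilon\}$ (checked to remain $F$-convex) and letting $\epsilon\to0^+$. So the endpoint that needs a truncation is a finite $a$, not $a=-\infty$: there $\max\{\phi,-k\}=\phi$ for large $k$, since the data are bounded. Note also that your transport of $H_{\ell-a}$ yields $H(s/(\ell-a))$ only if $H_{\ell-a}$ is read on $I$ as $r\mapsto H((r-a)/(\ell-a))$.
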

\begin{theorem}
\label{Theorem:5.2}
Let $\Omega$ be a convex domain in ${\mathbb R}^n$ with $\partial\Omega\not=\emptyset$. 
Let $F$ be admissible on $I$, where $I:=(a,\ell]$ or $I:=[a,\ell]$ with $-\infty\le a<\ell<\infty$, and assume that $F\in C^2(\mathrm{int}\,I)$. 
Then $F$-convexity is preserved  {under}  the Dirichlet heat flow~\eqref{eq:DH} if and only if
$$ 
\lim_{r\to \ell^-}F(r)=\infty,\quad F'>0 \ \text{in } \mathrm{int}\,I,
\quad\text{and}\quad (\log f_F')' \ \text{is convex on } F(\mathrm{int}\,I).
$$
\end{theorem}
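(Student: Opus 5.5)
The plan is to prove Theorem~\ref{Theorem:5.2} by adapting the proof of Theorem~\ref{Theorem:1.1}~(3) to the Dirichlet setting, with the boundary value $\ell$ playing the role of spatial infinity.

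\emph{Necessity of $\lim_{r\to\ell^-}F(r)=\infty$.} This is built into Definition~\ref{Definition:5.1}, since $F(\ell)=\infty$ and $F$ is continuous on $(a,\ell)$. If instead the limit were finite, there would be a jump at $\ell$. Then the solution $e^{t\Delta_{\mathrm{DH}}}\phi$, which is continuous and attains the value $\ell$ on $\partial\Omega$, would not stay $F$-convex near the boundary: along a segment ending at $\partial\Omega$, the value $F(\ell)=\infty$ is compatible with convexity only if $F(u)\to\infty$ as $u\to\ell$. I will make this precise by citing the corresponding step of \cite{IST05}, after reversing the sign: convexity with boundary value $\ell$ mirrors concavity with boundary value $0$.

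\emph{Necessity of $F'>0$ and convexity of $g_F$.} I will repeat the argument of \eqref{eq:3.15}--\eqref{eq:3.19}. The point is that the condition is local. Choose an interval $(r_1,r_2)\subset \mathrm{int}\,I$ on which $F'>0$. Then take a one-dimensional profile $\phi(x)=f_F(\xi)$ on a region where $F(\phi)$ is affine in $\xi=x_1$, cut off so that $\phi$ is bounded, $F$-convex, and tends to $\ell$ near $\partial\Omega$. For instance, take $F(\phi)$ to be the maximum of an affine function and a convex function that blows up at $\partial\Omega$. At interior points $x,y$ where $F(\phi)$ is affine, $\Phi_\lambda(x,y,0)=0$ and $\Phi_\lambda\le0$ for $t>0$. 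Hence $\partial_t\Phi_\lambda\le0$ at $t=0$, which yields \eqref{eq:3.18} and therefore \eqref{eq:3.19}. The continuation argument that rules out $F'(r_1)=0$ carries over verbatim. A different approach is to invoke \cite{IST05}*{Proposition~4.8} directly, since that is local in nature.

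\emph{Sufficiency.} This is the main step. I will set $v=F(u)$ with $u=e^{t\Delta_{\mathrm{DH}}}\phi$. Then $v$ solves \eqref{eq:3.9} in $\Omega\times(0,\infty)$, and $v\to\infty$ at $\partial\Omega$. Define $V_\lambda$ as the infimum over $x=(1-\lambda)x_0+\lambda x_1$ with $x_0,x_1\in\Omega$. Because $v$ blows up at $\partial\Omega$ (replacing \eqref{eq:3.1}), the infimum is attained at interior points. For unbounded $\Omega$, I will add $\epsilon|x|^2$ as in \eqref{eq:3.7}, using the barrier $u_\epsilon=u+\epsilon(|x|^2+2nt)$; this remains a solution with boundary value at least $\ell$, and I will take $F(r)=\infty$ for $r\ge\ell$. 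Proposition~\ref{Proposition:3.1}, whose proof is purely local, then shows that $V_\lambda$ is a viscosity supersolution. Consequently $U_\lambda=F^{-1}(V_\lambda)$ is a supersolution of the heat equation with $U_\lambda\ge\phi$ at $t=0$. On $\partial\Omega$ one of $x_0,x_1$ must approach $\partial\Omega$, which forces $U_\lambda\to\ell$.

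The comparison principle on $\Omega$, applied with boundedness in place of the Gaussian growth of Lemma~\ref{Lemma:2.3}, then gives $u\le U_\lambda$. I expect the hardest step to be this boundary behaviour: continuity of $U_\lambda$ up to $\partial\Omega$, and the rigorous check that $U_\lambda\ge\ell$ there. I would handle it by approximating $\Omega$ by bounded convex $\Omega_k$ and using the blow-up of $F$ at $\ell$, before letting $\epsilon\to0$.
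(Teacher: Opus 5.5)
Your proof follows a different route from the paper's. It is viable, but one step is mis-justified and one technical point is not yet handled.

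\textbf{Comparison of routes.} The paper does no analysis in $\Omega$. It sets $\tilde u:=\ell-u$, which solves the Dirichlet problem with zero boundary data. It then observes that $F$-convexity of $u$ is $\tilde F$-concavity of $\tilde u$, where $\tilde F(s):=-F(\ell-s)$; thus $\tilde F(0)=-\infty$ and $(\log f_{\tilde F}')'(z)=-g_F(-z)$. Both directions of Theorem~\ref{Theorem:5.2}, including $\lim_{r\to\ell^-}F(r)=\infty$, are then read off from \cite{IST05}*{Theorems~1.5 and~1.6}. The case $I=[a,\ell]$ is reduced to $(a,\ell]$ via $\max\{\phi,a+\epsilon\}$.

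You use the reflection only for the limit condition and otherwise transplant Section~3. Necessity comes from the first-variation argument \eqref{eq:3.15}--\eqref{eq:3.19}. Sufficiency comes from Proposition~\ref{Proposition:3.1} plus comparison, with $\partial\Omega$ (where $F(u)\to\infty$) playing the role of spatial infinity. The $\epsilon|x|^2$ perturbation, with $F:=\infty$ on $[\ell,\infty)$, confines the infimum to a bounded set. This essentially re-proves the needed half of \cite{IST05} in convex language. It is self-contained, but it costs boundary technicalities that the paper's one-line reduction avoids. It also gives nothing toward Theorem~\ref{Theorem:5.1}, which the paper gets from the same reduction.

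\textbf{First point to fix: the limit condition.} $\lim_{r\to\ell^-}F(r)=\infty$ is not built into Definition~\ref{Definition:5.1}. That definition only requires continuity on $(a,\ell)$ and $F(\ell)=\infty$, so a jump at $\ell$ is allowed; this is why the condition appears in the theorem. Your heuristic about segments ending at $\partial\Omega$ does not detect the jump either. For $t>0$ one has $a<u<\ell$ in $\Omega$, so only finite values of $F$ enter. A convex function bounded by $L:=\lim_{r\to\ell^-}F(r)$ with boundary value $L$ is perfectly possible.

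The real obstruction is interior: $F(\ell)=\infty$ allows $F$-convex data equal to $\ell$ on an open set. Take $\overline{B(x_0,\rho)}\subset\Omega$ and $c>0$ with $L-c\in F(\mathrm{int}\,I)$. Set $\phi=f_F\bigl(L-c+c\rho^{-2}|x-x_0|^2\bigr)$ in the ball and $\phi=\ell$ elsewhere. Suppose $F(u(\cdot,t))$ were convex for $t>0$. Its pointwise limit as $t\to0^+$ would then be convex on $\Omega$, at most $L$, equal to $L$ on $\Omega\setminus B(x_0,\rho)$, and less than $L$ at $x_0$. This is impossible, since a convex function attaining its maximum at an interior point is constant. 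The argument proves this necessity for every $n$ without citing \cite{IST05}.

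\textbf{Second point to fix: the corner in the comparison step.} The delicate set is $\partial\Omega\times\{0\}$, not just $\partial\Omega$. Unless $\phi\to\ell$ at $\partial\Omega$, the parabolic-boundary inequality $\limsup u\le\liminf U_\lambda$ fails there. Indeed $U_\lambda\le u\approx\phi$ when $t\ll d_\Omega^2$. Approximating $\Omega$ does not by itself cure this; you must also modify the data. For example, use $\phi_k:=f_F\bigl(\max\{F(\phi),-\log d_\Omega-k\}\bigr)$ with $d_\Omega:=\mathrm{dist}(\cdot,\partial\Omega)$. Since $d_\Omega$ is concave on convex $\Omega$, $\phi_k$ is $F$-convex. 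It tends to $\ell$ uniformly at $\partial\Omega$ and decreases to $\phi$. Monotone convergence in the Dirichlet heat kernel representation then lets you pass to the limit.
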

{\bf Proofs of Theorems~\ref{Theorem:5.1} and \ref{Theorem:5.2}.}
We first consider the case $I=(a,\ell]$. 
For any solution $u$ to problem~\eqref{eq:DH}, define
$$
\tilde{u}(x,t):=\ell-u(x,t),\quad (x,t)\in\overline{\Omega}\times[0,T).
$$
Then $\tilde{u}$ satisfies
\begin{equation*}
\left\{
\begin{array}{ll}
\partial_t \tilde{u}=\Delta \tilde{u}\quad & \text{in}\quad\Omega\times(0,T),\vspace{3pt}\\
\tilde{u}=0 \quad & \text{on}\quad\partial\Omega\times(0,T)\quad\text{if}\quad\partial\Omega\not=\emptyset,\vspace{3pt}\\
\tilde{u}=\ell-\phi\quad & \text{on}\quad\Omega\times\{0\}.
\end{array}
\right.
\end{equation*}
By applying the results on the preservation of $F$-concavity under the Dirichlet heat flow 
(see \cite{IST05}*{Theorems~1.5 and~1.6}), 
we obtain Theorems~\ref{Theorem:5.1} and \ref{Theorem:5.2} in this case.

Next, we consider the case $I=[a,\ell]$. 
Let $\phi$ be $F$-convex in $\Omega$. 
For any $\epsilon\in(0,\ell-a)$, define
$$
\phi_\epsilon(x):=\max\{\phi(x),a+\epsilon\},\quad x\in\Omega.
$$
Let $x_0$, $x_1\in\Omega$ and $\lambda\in(0,1)$. 
If $(F(\phi(x_0)),F(\phi(x_1)))\not=(\pm\infty,\mp\infty)$, then
$$
F(\phi((1-\lambda)x_0+\lambda x_1))
\le(1-\lambda)F(\phi(x_0))+\lambda F(\phi(x_1)).
$$
If $\phi((1-\lambda)x_0+\lambda x_1)\ge a+\epsilon$, then
\begin{align*}
 & F(\phi_\epsilon((1-\lambda)x_0+\lambda x_1))
 =F(\phi((1-\lambda)x_0+\lambda x_1))\\
 & \le(1-\lambda)F(\phi(x_0))+\lambda F(\phi(x_1))
 \le(1-\lambda)F(\phi_\epsilon(x_0))+\lambda F(\phi_\epsilon(x_1)).
\end{align*}
If $\phi((1-\lambda)x_0+\lambda x_1)<a+\epsilon$, then
\begin{align*}
 & F(\phi_\epsilon((1-\lambda)x_0+\lambda x_1))
 =F(a+\epsilon)\\
 & =(1-\lambda)F(a+\epsilon)+\lambda F(a+\epsilon)
 \le(1-\lambda)F(\phi_\epsilon(x_0))+\lambda F(\phi_\epsilon(x_1)).
\end{align*}
If either $F(\phi(x_0))=\infty$ or $F(\phi(x_1))=\infty$, then the inequality
$$
F(\phi_\epsilon((1-\lambda)x_0+\lambda x_1))
\le(1-\lambda)F(\phi_\epsilon(x_0))+\lambda F(\phi_\epsilon(x_1))
$$
holds trivially. 
These considerations show that $\phi_\epsilon$ is $F$-convex in $\Omega$.
Then,  applying the arguments in the case $I=(a,\ell]$ and letting $\epsilon\to 0^+$, 
we obtain the desired results. 
Hence, Theorems~\ref{Theorem:5.1} and \ref{Theorem:5.2} hold.
$\Box$
\medskip

\noindent
{\bf Acknowledgements.}\\
K.I. was supported in part by JSPS KAKENHI Grant Number 25H00591.
T.P. was partially supported by (MUR) PRIN 2022 ``Partial differential equations and related geometric-functional inequalities" grant number 20229M52AS 
and the ``INdAM - GNAMPA Project", CUP E5324001950001.
P.S. was partially supported by European Union
 -- Next Generation EU -- through the project ``Geometric-Analytic Methods for PDEs
and Applications (GAMPA)'', within the PRIN 2022 program, and by INdAM - GNAMPA.
\medskip

\noindent
{\bf Data availability.}\\
No data was generated or analyzed as part of the writing of this paper.
\medskip

\noindent
{\bf  Conflict of interest.}\\
The authors state no conflict of interest.
%%%%%%%%%%%%%%%%%%%%%%%%%%%%%%%%%%%%%
%%%%%%%%%%%    references    %%%%%%%%%%%%%%%%%%
%%%%%%%%%%%%%%%%%%%%%%%%%%%%%%%%%%%%%

 \end{document}